\newcommand{\vb}{\mathbf{b}}
\newcommand{\vc}{\mathbf{c}}
\newcommand{\vd}{\mathbf{d}}
\newcommand{\ve}{\mathbf{e}}
\newcommand{\vl}{\mathbf{l}}
\newcommand{\vu}{\mathbf{u}}
\newcommand{\vv}{\mathbf{v}}
\newcommand{\vs}{\mathbf{s}}
\newcommand{\vt}{\mathbf{t}}
\newcommand{\vw}{\mathbf{w}}
\newcommand{\vx}{\mathbf{x}}
\newcommand{\vz}{\mathbf{z}}
\newcommand{\allzeros}{\mathbf{0}}
\newcommand{\allones}{\mathbf{1}}
\newcommand{\vA}{\mathbf{A}}
\newcommand{\vC}{\mathbf{C}}
\newcommand{\vD}{\mathbf{D}}
\newcommand{\vE}{\mathbf{E}}
\newcommand{\vF}{\mathbf{F}}
\newcommand{\vI}{\mathbf{I}}
\newcommand{\vL}{\mathbf{L}}
\newcommand{\vP}{\mathbf{P}}
\newcommand{\vQ}{\mathbf{Q}}
\newcommand{\vT}{\mathbf{T}}
\newcommand{\vU}{\mathbf{U}}
\newcommand{\vV}{\mathbf{V}}
\newcommand{\vW}{\mathbf{W}}
\newcommand{\vlambda}{\bm{\lambda}}
\newcommand{\veta}{\bm{\eta}}
\newcommand{\vsigma}{\bm{\sigma}}
\newcommand{\vSigma}{\bm{\Sigma}}
\newcommand{\valpha}{\bm{\alpha}}
\newcommand{\vbeta}{\bm{\beta}}
\newcommand{\vmu}{\bm{\mu}}
\newtheorem{theorem}{Theorem}
\newtheorem{assumption}{\bf Assumption}
\newtheorem{corollary}{Corollary}
\newtheorem{definition}{Definition}
\newtheorem{lemma}{Lemma}
\newcommand{\norm}[2] {\|{#1}\|_{#2}}
\newcommand{\abs}[1] {\left|{#1}\right|}
\DeclareMathOperator*{\conv}{conv}
\DeclareMathOperator*{\ext}{ext}
\DeclareMathOperator*{\argmin}{arg\,min}
\def\rit{\mathbb{R}}
\begin{document}
\title{Multipolar Robust Optimization}

\author[1]{Walid Ben-Ameur \thanks{walid.benameur@telecom-sudparis.eu}}
\author[2]{Guanglei Wang}
\author[2]{Adam  Ouorou}
\author[3]{Mateusz \.Zotkiewicz}

\affil[1]{Samovar, T\'el\'ecom SudParis, CNRS, Universit\'e Paris-Saclay}
\affil[2]{Orange Labs Research, France}
\affil[3] {Institute of Telecommunications, Warsaw University of Technology, Poland}
\maketitle

\begin{abstract}
We consider linear programs involving uncertain parameters and propose a new
tractable robust counterpart which contains and generalizes several other
models including the existing \emph{Affinely Adjustable Robust Counterpart} and
the \emph{Fully Adjustable Robust Counterpart}. It consists in selecting a set
of \emph{poles} whose convex hull contains some projection of the uncertainty
set, and computing a recourse strategy for each data scenario as a convex
combination of some optimized recourses (one for each pole). We show that the
proposed \emph{multipolar robust counterpart} is tractable and its complexity is controllable.
Further, we show that under some mild assumptions, two sequences of upper and lower bounds
 converge to the optimal value of the fully adjustable robust
counterpart. To illustrate the approach, a robust problem related to lobbying
under some uncertain opinions of authorities is studied. Several numerical
experiments are carried out showing the advantages of the proposed robustness
framework and evaluating \emph{the benefit of adaptability}. 

\end{abstract}

\section{Introduction}
\label{intro}

Uncertainty in optimization parameters arises in many applications due to the
difficulty to measure data or because of their variability.  
To deal with uncertainty, there are mainly two approaches:  stochastic
optimization and robust optimization. In the first case, some probabilistic
assumptions are made about the uncertain data \cite{BiLo,Dan56,KaWa}.  One is
then interested in computing a solution optimizing some moments of  random
variables depending on the data. Another variant, known as chance constrained
programming \cite{ChCo}, consists in imposing that some constraints are
satisfied only with some probability. 

Robust optimization is a more recent approach dealing with uncertainty. It
does not require specifications of the exact distribution of the problem's
parameters. Roughly speaking, uncertain data are assumed to belong to a known
compact set, called uncertainty set, and we aim at finding a solution that is \emph{immunized}
against all possible realizations in the uncertainty set.  An early
contribution related to robust optimization is the work of Soyster \cite{SOY}
followed by intensive investigations in the last 20 years starting with
\cite{Bental98,ELG} in the context of convex optimization and the book
\cite{KOUV} dealing with discrete optimization. Almost at the same time, and
in an independent way, a lot of work was initiated in \cite{Fin97} and
\cite{DUF98} on robust optimization in communication networks   dealing with uncertain traffic
matrix, see \cite{BOZ12} for a survey. 

Robust optimization and stochastic programming are related in numerous ways.
For example, using some knowledge about the distribution of uncertain data, it
is sometimes possible to define an uncertainty set in such a way that the
robust solution is an approximated solution of a chance constrained problem
(see, e.g., \cite{BenEl-Nem07a,BER2011b} for details and references). An
approach combining robust optimization and stochastic programming  consists in
computing solutions that are distributionally robust where the distribution of
parameters is assumed to vary within some set (for example, when the mean and
the covariance matrix are known) (see, e.g., \cite{ELG03,GOH}).

The definition of the uncertainty set is a critical issue since a bad choice
might lead to very expensive solutions. One way to alleviate overconservatism
of the robust approach is to assume that a subset of the decision variables are
adjustable on the realization of the uncertain data.  Let us for example
consider the following linear problem
\begin{align*}
    \min \hspace{1em}& \vc^{T}\vx\\
    \textrm{s.t.}\hspace{1em}& \vA \vx\le \vb,\\
& \vx\in\rit^{n},
\end{align*}
involving uncertain parameters. We assume that $\vx$ is partitioned as
$\vx=(\vu,\vv)$, where $\vu$ represents the non-adjustable and
$\vv$ the adjustable variables. The robust counterpart of this uncertain
problem under consideration reads 
\begin{equation}\label{eq:pb} \tag{FARC}
\begin{split}
    \min\limits_{u,v} \hspace{1em}& \vc^T\vu\\
    \textrm{s.t.}\hspace{1em}& \vU\vu+\vV\vv(\xi)\le \vb, ~
    \lbrack\vU,\vb\rbrack \in \Xi,
\end{split}
\end{equation}
where the uncertain parameters are $\vU\in \rit^{m\times n}$ and $\vb\in
\rit^m$, while $\vV$ and $\vc$ are assumed to be known. We denote by 
$\xi\equiv \lbrack \vU,\vb\rbrack \in \Xi$ the uncertain parameters  belonging to the  uncertainty set  $\Xi$ assumed to be 
compact, convex and with a non-empty interior.  $\xi$ will be considered as a vector in the rest of the paper.

The non-adjustable variables are sometimes interpreted ``here and now"
variables, while the adjustable ones can be seen as ``wait and see" variables.
This robust counterpart above is generally called \emph{fully-adjustable robust
counterpart} (FARC). FARC is sometimes called the dynamic robust
counterpart since $\vv$ depends on $\xi$. FARC can be seen as a  two-stage
optimization problem where $\vu$ are the first-stage variables and $\vv$ are the
second-stage variables.     

If variables $\vv$ are also static, then FARC simply becomes the standard
\emph{static robust counterpart} denoted by SRC. Cases where FARC and SRC are
equivalent have  been pointed out in \cite{BenGorGus04} where it is shown that
adaptability does not lead to any improvement in the constraint-wise
uncertainty case. Still, FARC is generally much less conservative than SRC.
In other words, there is generally some \emph{benefit of adaptability}. Solving
FARC is, unfortunately difficult in general cases as shown by many authors
\cite{BenGorGus04,CHE,MIN10}. Another concern related to FARC is the inherent
difficulty of implementing the solution $\vv(\xi)$ in a practical way. 

 To get a tractable optimization problem and also to alleviate
some of the overconservatism of SRC, an affinely adjustable approach was
proposed in \cite{BenGorGus04}, where the adjustable variables $\vv$ are not
fully adaptable (dynamic), but are assumed to depend on the uncertain data
$\xi$ in an affine way: 
\begin{equation*} 
\vv(\xi)= \vw+\vW\xi,\quad \xi\equiv\lbrack \vU, \vb\rbrack\in\Xi,
\end{equation*}
where $\vw$ and the elements of matrix $\vW$ are new decision variables
(a.k.a. affine decision rules). The induced formulation is called
\emph{affine-adjustable robust counterpart} (AARC).  An affine approach was
also independently proposed for network optimization problems where the traffic
matrix is supposed to be uncertain and the way how traffic is splitted through
network's paths is optimized \cite{WAL03,WAL05,OuVi07}. Further developments
appeared in \cite{BKOV09,Ouo12}.

Applying affine decision rules naturally leads to less expensive solutions than
those obtained by the static approach. The performance gap quantified by the
difference between optimum of AARC and the optimum of FARC was discussed for
robust linear  problems with right-hand-side uncertainty in
\cite{BER2012,BER2010b}. One of the results of \cite{BER2012} states that AARC
is equivalent to FARC when the uncertainty set is a
simplex. Some tight approximation bounds relating the optimum of AARC to that
of FARC in the right-hand-side uncertainty case are also given there. 

Related investigation on problems with some special uncertainty sets (integer
sublattices of the unit hypercube) are discussed in
\cite{iancu2013supermodularity}, where they provide sufficient conditions such
that the associated affinely adjustable decision rules lead to exact optimum of
FARC.

The affine approach is related to the well-known linear or
first-order decision rules used in the context of multi-stage stochastic
optimization \cite{GAR}.
Linear decisions rules were also used in \cite{KUHN} in the context of
stochastic programming not only to get upper bounds (as done above) but also to
get lower bounds by properly approximating the dual problem using linear
decision rule.

As observed in \cite{CHEN}, even though AARC has been successfully applied to
several problems, its performance might be unsatisfactory under situations where
the adjustable variables exhibit high nonlinearity in terms of the uncertain
parameters. This led to some extensions of the affine approach in
\cite{CHEN08,CHEN} (see also references therein) by reparametrizing the
uncertainties and then applying the affinely adjustable approach. Roughly
speaking, a new set of variables is introduced (for example the positive and
the negative parts of the original uncertainties), and the adjustable variables
are assumed to affinely depend on the new set of parameters. A similar idea is
also proposed in \cite{BER2010b} in the context of one-dimensional constrained
multistage robust optimization.

Other extensions of affine decision rules have been proposed in literature. In
\cite{BER2011a} polynomial recourse actions are considered where $\vv$ is
expressed as a polynomial in uncertainty parameters with degree no larger than
a fixed constant. The complexity of the robust counterpart problem is then
related to testing the positivity of a polynomial. Using some recent results in
algebraic geometry stating that under mild conditions, a positive polynomial can be
expressed as a sum of squares (not a priori bounded), the robust counterpart is
approximated by considering sums of squares of degree no larger than a fixed
constant.  As a sum of squares can be represented by a semidefinite programming
\cite{LASS}, the proposed robust counterpart can be efficiently handled
\cite{BER2011a}.  

Another robust approach dealing with uncertainty, termed as \emph{multi-static}
approach in \cite{BOZ12}, was proposed and studied in
\cite{WAL07,BenZet11,BenZet10}. It consists in partitioning the uncertainty set
$\Xi$ into a finite number of subsets $\Xi_1,\dots,\Xi_p$ and using a recourse
action $\vv_i$ for each subset $\Xi_i$. In other words, if $\xi \in \Xi_i$,
then we take $\vv = \vv_i$. The recourse actions $\vv_i$ are of
course subject to optimization.     
A quite close idea is proposed in
\cite{BER2010}, where it was called \emph{finite adaptability}. The performance
of finite adaptability in a fairly general class of multi-stage
stochastic and adaptive optimization problems was investigated in
\cite{bertsimas2011geometric}. 

One can also combine finite adaptability and the affinely-adjustable approach
by partitioning the uncertainty set into some subsets and considering some
optimized specific affine decision rules for each subset. This was also
considered in   \cite{WAL07,BOZ12} in the context of network design problems.
This type of adaptability  might also be called \emph{piecewise-affine
adaptability}.  Piecewise-affine rules  were also considered in several
other papers such as \cite{BEM,GOH}.

While a great number of proposals in robust optimization have appeared, there are
still challenges. First, to the best of our knowledges, none are general
enough to encompass static robustness, affinely adjustable robustness and fully
adjustable robustness. Second, as observed in \cite{BER2011a}, there is no
systematic way to influence the trade-off between the performance of the
resulting policies and the computational complexity required to obtain them.
Third, the uncertain parameters of an optimization problem can be sometimes
difficult to observe. In several applications, only a subset of such parameters
or some aggregates of them can be observed.   

The objective of this paper is to provide a framework addressing those
challenges at the same time. Our contributions are four-fold:
\begin{enumerate}
\item A novel approach. We propose a hierarchical and convergent framework of
    adjustable robust optimization -- \emph{multipolar robust approach}, which
    generalizes notions of static robustness, affinely adjustable
    robustness, fully adjustable robustness and fill the gaps in-between. As a
    byproduct, a new way to look at the affine adaptability is proposed. The
    result of \cite{BER2012} stating that  affine rules are optimal  when the
    uncertainty set is a simplex is also obtained as a consequence of the
    multipolar approach. 
\item A comprehensive analysis. We show that the \emph{multipolar robust
counterpart} is tractable by either a cut generation procedure or a compact
formulation. Further, we prove that the multipolar approach can generate a
sequence of upper bounds and a sequence of lower bounds at the same time and both
sequences converge to the robust value of FARC under some mild assumptions.
\item A general constructive algorithm of \emph{pole-sets}. The multipolar
approach is based on some tools related related to the uncertainty set, that we
term as pole-sets. For their construction, we start with a simplex and then
compute the best homothetic transformation of this simplex to allow it to
enclose a given convex set. An efficient algorithm is proposed to compute such
homothetic set. As a byproduct, we provide a very simple proof of the geometric
results of \cite{Nevskii2011} related to hypercubes. The pole-sets obtained
after this homothetic transformation are then improved using a tightening
procedure. 
\item An application. To numerically illustrate the multipolar approach, a
lobbying problem is considered where a lobby aims to minimize the budget needed
to convince a set of voters taking into account a reasonable opinion dynamics
model under some uncertainty. The benefit of adaptability is clearly shown for
this problem.  

\end{enumerate}

{\bf Outline.} In Section \ref{sec:concept}, we present the concept and
ingredients of multipolar robust optimization and show that static robustness,
affinely adjustable robustness, fully adjustable robustness are special cases
of multipolar robust framework. In Section \ref{sec:analysis}, we discuss the
tractability, the monotonicity and the convergence of the proposed approach.  A
simple illustrative example is described in Section \ref{sec:example}.  In
Section \ref{sec:poles}, we propose algorithms for pole-set generation. Section
\ref{sec:problem} is dedicated to a numerical example on a lobbying problem
under several uncertainty scenarios. Finally, concluding remarks follow in
Section \ref{sec:conclusion}.

{\bf Notation.}
Throughout this paper, we use $\Xi$ to represent a compact convex uncertainty
set  and $\xi$ to denote a member of $\Xi$. We use $\vI$ to denote the identity
matrix. Vectors and matrices are marked in bold, and their scalar components
are presented in italic.  Given any matrix $\vC$,  ${\vC}^T$ denotes its
transpose. We also use $[\vC,\vD]$ to denote the matrix where $\vC$ and $\vD$
are concatenated by columns assuming they have the same number of rows.
Similarly, $(\vC,\vD)$  denotes the matrix obtained by  row concatenation of
two matrices $\vC$ and $\vD$ when they have the same number of columns. Observe
that $\vv = (v_1,\ldots, v_n)$ is then a vector and $[v_1,\ldots,v_n] = \vv^T$.  We
use $\delta_{ij}$ to represent the Kronecker's delta function, where
$\delta_{ij}=1$ if $i=j$, $0$ otherwise.
For a set $S\in \rit^n$, we use $\ext(S)$ to represent the set of its extreme
points,  $\conv S$ to represent its convex hull and $\dim(S)$ to denote its
dimension. If $S$ is finite, we use $\abs{S}$ to represent its cardinality. We
also use the standard notation for usual norms: $\norm{\cdot}{\infty}$ for the
infinity norm,  $\norm{\cdot}{1}$ for the Manhattan norm and $\norm{\cdot}{2}$ for the Euclidean norm.

\section{The multipolar robust optimization concept} \label{sec:concept}

In this section, we introduce the main ingredients of multipolar robustness and then
setup the multipolar robust counterpart as a novel approximation of
\ref{eq:pb}.

{\bf Shadow matrix.} Like other robust approaches, multipolar approach is also
based on an uncertainty set $\Xi$. In addition, we consider a matrix associated
with certain operations on the uncertain information, which can be data
aggregation, filtering, and selection. Note that these operations can either be
natural or artificial. Natural operations are induced by the difficulty of
measurements or shortage of data. For example, in communication networks,
traffic flows are usually observed in an aggregated manner (the consequence of
aggregating uncertain demands from multiple origin-destination pairs).
Nevertheless, adjustable recourse actions should be implemented based the
observed partial information.  On the other hand, artificial operations can be
certain techniques to control the complexity of the multipolar robust
counterpart, as explained in the concluding remarks of this section. We call
the associated matrix of an operation \emph{shadow matrix} since the operation
either reduces the size of the multipolar robust counterpart or is a direct
consequence of observations. We use $\vP\in \rit^{n_0\times \dim(\Xi)}$ to
denote a shadow matrix, where $n_0$ is the dimension of the shadow (i.e., the
resulting partial information) and $\dim(\Xi)$ is the dimension of the
uncertainty set $\Xi$. The resulting partial information is defined by 
\begin{align}\label{eq:shadow}
\Xi_{P}:=\vP\Xi\equiv\{\vP\xi, \xi\in \Xi\}.
\end{align}
 When $\vP$ is identity matrix,
we have a complete measure of uncertainty.

We will  assume that $\vP$  is full row rank matrix.
Consequently, $\Xi_{P}$ is also compact, convex and has a non-empty interior. 

{\bf Pole-set.} A key component of the multipolar approach is a finite set
of \emph{poles}, which are given vectors in the range space of the
shadow matrix. We denote by $\Omega$ such a \emph{pole-set}. We say that
$\Omega$ is a pole-set of  $\Xi_P$ iff for any $\xi\in\Xi$, $\vP\xi$ belongs to
the convex hull of $\Omega$ (a convex combination of poles) denoted by $\conv
\Omega$. Given a set $\Xi_P$, a collection of pole-sets of $\Xi_P$
is defined as
\begin{align}\label{eq:feasibleOmega}
\mathcal{F}_{\Xi_P} :=\left\{\Omega: \Xi_P \subseteq \conv \Omega \right\}.
\end{align}
Obviously, extreme points of $\Xi_P$ form a pole-set, i.e., $\ext(\Xi_P) \in
\mathcal{F}_{\Xi_P}$. 

{\bf Multipolar robust counterpart.}
We now setup the multipolar robust counterpart w.r.t. an uncertainty set $\Xi$,
a shadow matrix $\vP$, a pole-set $\Omega \in \mathcal{F}_{\Xi_P}$. 
For each $\xi\in \Xi$, we consider a weight $\lambda^{\xi}_{\omega}$ for each pole
$\omega$ in $\Omega$. Then, for each scenario $\xi\in\Xi$, the following system
has a solution 
\begin{equation} \label{eq:policy}
\begin{array}{l}
\sum\limits_{\omega\in\Omega}\lambda^{\xi}_{\omega}\omega=\vP\xi,\\ \\[-2mm]
\sum\limits_{\omega\in\Omega}\lambda^{\xi}_{\omega}=1,\\
\\[-2mm]
\lambda^{\xi}_{\omega}\ge 0,\;\;\omega\in\Omega.
\end{array}
\end{equation}
Let $\Lambda_{\xi}$ be the set of weight vectors $\vlambda^{\xi}$ satisfying the
above system for a given $\xi\in\Xi$. In the considered paradigm, each pole is
associated with a recourse action, and the recourse action in the presence of
$\xi\in \Xi$ is approximated by a convex combination of the recourse actions
associated with the poles. Specifically, let vector $\vv_{\omega}$ be the
recourse action associated with pole $\omega$ in the above system. We require
the adjustable variables $\vv(\xi)$ to be restricted to
\begin{equation}\label{mpconstr}
    \vv(\xi) = \sum\limits_{\omega\in\Omega}\lambda^{\xi}_{\omega}\vv_{\omega},
\end{equation}
where $\vlambda^{\xi}\in \Lambda_\xi$. We can readily present the \emph{multipolar
robust counterpart} defined by
\begin{align}
    \Pi_\Xi(\vP,\Omega) = &\min \limits_{\vu,\vv} \hspace{1em}\vc^T\vu \tag{MRC}\label{mpro}\\
    ~&\textrm{s.t.}\hspace{1em}\vU\vu+\vV\sum\limits_{\omega\in\Omega}\lambda^{\xi}_{\omega}\vv_{\omega}\le
    \vb,~\xi\in\Xi,\;\vlambda^{\xi}\in\Lambda_{\xi}.  \label{eq:counterpart}
\end{align}
Following the spirits of robust optimization, the multipolar robust counterpart
\eqref{mpro} seeks a pair of
non-adjustable solution $\vu$ and a set of recourse actions related to poles 
$\vv_\omega, \omega \in \Omega$ such that the objective function is minimized
while hedging against the uncertainty set $\Xi$. In brief, given $\Xi$, the
multipolar robust approach can bee seen as a set function
of a pole-set $\Omega$ and a shadow matrix $\vP$. We denote the function by
\begin{align*}
\Pi_\Xi: \rit^{n_0\times \dim(\Xi)} \times \mathcal{F}_{\Xi_P} \ni 
\left(\vP,\Omega\right) \mapsto \Pi_{\Xi}\left(\vP,\Omega\right)\in \rit
\end{align*}
and call $\Pi_\Xi\left(\vP,\Omega\right)$ \emph{multipolar robust value} w.r.t. $\left(\vP,\Omega\right)$. Also,
we call $\left(\vu,\vv\right)$ \emph{multipolar solution}.

To conclude this section, we add few remarks on the concept of shadow matrix
and pole-set to clarify the motivation behind  these ingredients.
\begin{itemize}
    \item Note that by \eqref{eq:counterpart}, the solution is protected
        against the considered uncertainty $\Xi$. Neither a shadow matrix $\vP$ nor a pole-set
        $\Omega$ changes the uncertainty set, so $\vP$ and $\Omega$ are not used
        to approximate the uncertainty set. 
    \item Observe that $1+ \dim(\Xi_P) \le \abs{\Omega}$, so we can use 
        the shadow matrix $\vP$ to reduce the number of recourse actions and
        therefore the number of variables of \eqref{eq:counterpart}. Reducing
        the number of poles leads to an MRC which is easier to
        solve as will be shown in Section \ref{sec:analysis}.

    \item In several applications, after data is revealed, the adjustable
        variables should be quickly chosen and used. This is fortunately easy
        to do in the multipolar robust framework since the only thing to do is
        to find the coefficients $ \lambda^{\xi}_{\omega}$ and use them to
        combine the already computed recourse vectors $\vv_\omega, \omega \in
        \Omega$.
\end{itemize}
\subsection{Special cases} \label{sec:special}

We show in this section that \ref{mpro} generalizes SRC, AARC, and FARC by
different settings of pole-sets and recourse actions associated with poles. 

First, we show that SRC is a special case of \ref{mpro}. Imposing $\vv_{\omega} =
\vv_{\omega'}$ for any pair of $\omega$ and $\omega'$ belonging to $\Omega$ leads
to  $\vv(\xi)=\vv_{\omega}, \forall \xi\in \Xi$, which means that the recourse
action is static. Another way to get SRC is to impose that $\vP$ is a null
matrix having one row (relaxing in this case the full row rank constraint related to $P$) and  $\Omega$ contains just the zero vector.

Second, we show that FARC is a special case of \ref{mpro}. Let $\Omega$ be the
set of extreme points of $\Xi$ and $\vP=\vI$. Then $\conv\Omega=\Xi$, that is for
$\xi\in\Xi$, there exists $\vlambda^{\xi}\ge \allzeros$  such that
$\sum\limits_{\omega\in\Omega}\lambda^{\xi}_{\omega}=1$ and
$\sum\limits_{\omega\in\Omega}\lambda^{\xi}_{\omega}\omega=\xi$. By linearity
of inequalities \eqref{eq:counterpart}, imposing that
$\vU\vu+\vV\vv_{\omega}\le \vb$ for
each extreme point $\omega \in \Omega$ is necessary and sufficient to ensure
the satisfaction of all inequalities \eqref{eq:counterpart} for each $\xi \in
\Xi$. We get here the fully adjustable case representing the best that we can
obtain for this problem since  it is equivalent to assuming that $\vv$ can vary
with no restrictions.  Note that if the number of extreme points of  $\Xi$ is
limited, then the robust optimization counterpart can be efficiently solved.
However, if the number of extreme points of $\Xi$ is non-polynomial, the
problem is generally difficult (as already mentioned in Section \ref{intro},
see for example \cite{BenGorGus04,CHE,MIN10}).

Third, we show that AARC can also be generalized by \ref{mpro} by proving the
following theorem. 

\begin{theorem}\label{thm1:general}
Let $\Omega \in \mathcal{F}_{\Xi_P}$  such that $|\Omega| = 1 + \dim({\vP \Xi})$. Then the optimal solution of the corresponding MRC problem is exactly the best solution that is affine in $\vP \xi$. 
\end{theorem}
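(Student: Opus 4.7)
The plan is to show that the restriction $\vv(\xi)=\sum_{\omega\in\Omega}\lambda^{\xi}_{\omega}\vv_{\omega}$, under the size condition $|\Omega|=1+\dim(\vP\Xi)$, traces out exactly the class of recourse functions that are affine in $\vP\xi$. Once that equivalence of function classes is established, the two minimization problems (MRC and ``best affine in $\vP\xi$'') have identical feasible regions and therefore the same optimal value and solutions.

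First I would observe that since $\Omega\in\mathcal{F}_{\Xi_P}$, we have $\Xi_P\subseteq\conv\Omega$, so $\dim(\conv\Omega)\ge\dim(\Xi_P)=\dim(\vP\Xi)$. Combined with $|\Omega|=1+\dim(\vP\Xi)$, this forces the poles to be affinely independent and $\conv\Omega$ to be a simplex of dimension $d:=\dim(\vP\Xi)$ containing $\Xi_P$. I expect this counting argument to be the cleanest way to guarantee that a genuine simplex is in play; no ambiguity remains.

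Next, I would exploit the standard fact that in a $d$-simplex with affinely independent vertices $\omega_0,\ldots,\omega_d$, every point $y$ in the affine hull has unique barycentric coordinates $\lambda_\omega(y)$, and the map $y\mapsto\vlambda(y)$ is affine. Applying this with $y=\vP\xi$ shows that for every $\xi\in\Xi$ the set $\Lambda_\xi$ defined by \eqref{eq:policy} reduces to a single vector $\vlambda^\xi$, and $\xi\mapsto\vlambda^\xi$ is affine in $\vP\xi$. Hence the recourse $\vv(\xi)=\sum_\omega\lambda^\xi_\omega\vv_\omega$ is necessarily affine in $\vP\xi$, say $\vv(\xi)=\vw+\vW\vP\xi$ for some $\vw$ and $\vW$ that are linear functions of the pole-recourses $\{\vv_\omega\}$.

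For the converse direction I would check that any affine rule $\vv(\xi)=\vw+\vW\vP\xi$ can be represented in the multipolar form. Given arbitrary $\vw$ and $\vW$, define $\vv_\omega:=\vw+\vW\omega$ for each $\omega\in\Omega$. Then
\begin{equation*}
\sum_{\omega\in\Omega}\lambda^{\xi}_{\omega}\vv_\omega
=\vw\sum_{\omega\in\Omega}\lambda^{\xi}_{\omega}+\vW\sum_{\omega\in\Omega}\lambda^{\xi}_{\omega}\omega
=\vw+\vW\vP\xi=\vv(\xi),
\end{equation*}
using the two equalities in \eqref{eq:policy}. This shows the set of recourses available to MRC and the set of affine-in-$\vP\xi$ recourses coincide. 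Since MRC minimizes the same objective $\vc^T\vu$ over the same non-adjustable variable $\vu$ subject to the same constraints $\vU\vu+\vV\vv(\xi)\le\vb$ for all $\xi\in\Xi$, the two optimal values agree, proving the theorem. The main subtlety — and really the only one — is the affine-independence/uniqueness step; everything else is a direct substitution.
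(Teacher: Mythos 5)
Your proof is correct and follows essentially the same route as the paper: the cardinality condition forces the poles to be affinely independent, the barycentric coordinates $\vlambda^{\xi}$ are then unique and affine in $\vP\xi$ (the paper writes this explicitly as $\vlambda^{\xi}=\vD^{-1}(\vP\xi,1)$), and your converse construction $\vv_{\omega}=\vw+\vW\omega$ is exactly the paper's choice $\vE=[\vW,\vw]\vD$ written columnwise. No gaps; your treatment of the affine-independence step is if anything slightly more explicit than the paper's.
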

\begin{proof}
Since $\vP \Xi$ has non-empty interior,
$\vP \Xi \subset \conv \Omega$ and 
$|\Omega| = 1 + \dim({\vP \Xi})$,   the elements of $\Omega$ are affinely independent. 
Let $d=\dim({\vP \Xi})$ and assume  $\Omega = \{ \omega^{(1)},\dots,
\omega^{(d+1)}\}$. The shadow matrix
$\vP$ is here the identity matrix. 
Consider matrix $D$ obtained by taking vectors $\omega^{(i)}$ as
columns and adding a final line containing only coefficients equal to $1$.
\begin{align*}
    \vD=\begin{pmatrix} \omega^{(1)}_1 &\hdots & \omega^{(d+1)}_1 \\ \vdots &\vdots &\vdots
\\ \omega^{(1)}_d &\hdots &\omega^{(n+1)}_d \\ 1 &\hdots& 1 \end{pmatrix}. 
\end{align*}
Observe that $\vD$ is a non-singular square matrix of size $(d+1)$. 

Given any $\xi$, there are unique coefficients $\lambda^{\xi}_{\omega}$ such
that $\vP \xi = \sum\limits_{\omega\in\Omega}\lambda^{\xi}_{\omega}\omega$  and $
\sum\limits_{\omega\in\Omega} \lambda^{\xi}_{\omega}=1$. This can be written as
$\left(\begin{array}{l} \vP \xi , 1  
\end{array} \right) = \vD  \vlambda^{\xi}$ where  $\vlambda^{\xi}$ is the vector
whose components are the  $\lambda^{\xi}_{\omega}$ for $\omega \in \Omega$.  
This immediately implies that 
$\vlambda^{\xi} = \vD^{-1}  \left(\begin{array}{l} \vP \xi   , 1  
\end{array} \right)$.
Using $\vE$ to denote the matrix whose columns are the recourse vectors
$\vv_{\omega}$, equation \eqref{mpconstr} becomes $\vv = \vE \vD^{-1}
\left(\begin{array}{l} \vP \xi  , 1  \end{array} \right)$. This clearly implies
 that $\vv$  affinely depends on $\vP \xi$.

Let us now consider any affine policy $\vw + \vW \vP \xi$.  As shown above, the
recourse vector  $\vv$ provided by the multipolar approach is given by $\vE \vD^{-1}
\left(\begin{array}{l} \vP \xi  , 1  \end{array} \right)$. 
By taking $\vE = [\vW,\vw] \vD$, we get $\vv = \vw + \vW\vP  \xi$. In other words, any 
recourse policy that is affine in $\vP \xi$  can be obtained through the multipolar approach.  
\end{proof}
When $\vP = \vI$, we get the desired result below. 
\begin{corollary}\label{thm:general}
The affinely adjustable approach is a special case of the multipolar approach.
It corresponds to any set of $(\dim{\Xi}+1)$ affinely
independent poles, in multipolar robust optimization when $\vP=\vI$.
\end{corollary}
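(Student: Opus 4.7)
The plan is to obtain this corollary as a direct specialization of Theorem \ref{thm1:general} by taking $\vP = \vI$. I would first note that with $\vP = \vI$ the shadowed set reduces to $\vP \Xi = \Xi$, so $\dim(\vP \Xi) = \dim(\Xi)$, and the collection $\mathcal{F}_{\Xi_P}$ becomes $\mathcal{F}_\Xi = \{\Omega : \Xi \subseteq \conv \Omega\}$. Under the stated hypothesis that $\Omega$ consists of $\dim(\Xi)+1$ affinely independent poles with $\Xi \subseteq \conv \Omega$, all assumptions of Theorem \ref{thm1:general} are met (the affine independence is in fact automatic once $|\Omega|=\dim(\Xi)+1$ and $\conv \Omega$ contains $\Xi$, whose interior is non-empty, as already observed in the proof of Theorem \ref{thm1:general}).

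Next I would invoke Theorem \ref{thm1:general}: its conclusion asserts that the optimal value of the MRC coincides with the best recourse policy that is affine in $\vP \xi$. Substituting $\vP = \vI$, this says that the MRC optimum equals the best recourse policy of the form $\vw + \vW \xi$. But a recourse policy of precisely this form is the definition of AARC, and AARC's optimum is obtained by jointly optimizing over $\vu$ and the affine decision rule coefficients $(\vw,\vW)$ subject to the robust constraints. Hence $\Pi_\Xi(\vI,\Omega)$ equals the AARC optimum, which proves that AARC is realized as the special case of the multipolar approach with any $(\dim(\Xi)+1)$ affinely independent poles whose convex hull covers $\Xi$.

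Because Theorem \ref{thm1:general} has already done the two non-trivial pieces of the argument---showing (i) that the multipolar recourse $\vv(\xi) = \vE \vD^{-1}(\vP\xi,1)$ is forced to be affine in $\vP\xi$ when $|\Omega|=1+\dim(\vP\Xi)$, and (ii) that every affine rule is reproducible by an appropriate choice of $\vE$ via $\vE=[\vW,\vw]\vD$---there is essentially no additional obstacle here; the corollary is purely a matter of substituting $\vP = \vI$ and translating the conclusion back into the standard AARC definition.
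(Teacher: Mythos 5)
Your proposal is correct and matches the paper's own (implicit) argument: the paper derives this corollary simply by setting $\vP=\vI$ in Theorem~\ref{thm1:general}, exactly as you do, with the observation about automatic affine independence already contained in that theorem's proof. Nothing further is needed.
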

The following corollary is also immediate. 
\begin{corollary}\label{cor:general}
If the uncertainty set $\Xi$ is a simplex, then the
affinely adjustable robust counterpart   is equivalent to the fully adjustable robust counterpart
in the sense that their objective values are equal.
\end{corollary}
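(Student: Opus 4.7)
The plan is to exhibit a single pole-set $\Omega$ (with shadow matrix $\vP = \vI$) that simultaneously realizes FARC and realizes the best affine policy, and thereby pin the two values together via the MRC value. The geometric fact we exploit is that a simplex $\Xi$ of dimension $d := \dim(\Xi)$ has exactly $d+1$ extreme points, and these are affinely independent. So $\Omega := \ext(\Xi)$ satisfies both $\Omega \in \mathcal{F}_{\Xi}$ (trivially, since $\conv\Omega = \Xi$) and the cardinality condition $|\Omega| = 1 + \dim(\Xi)$ of Theorem~\ref{thm1:general}.

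Given this choice, I would argue in two steps. First, since $\Omega = \ext(\Xi)$ and $\vP = \vI$, the discussion given just before Theorem~\ref{thm1:general} (the treatment of FARC as a special case of MRC) identifies the multipolar value $\Pi_\Xi(\vI, \ext(\Xi))$ with the optimal value of FARC. Second, because $\Omega$ consists of $1 + \dim(\Xi)$ affinely independent points with $\Xi \subseteq \conv \Omega$, Theorem~\ref{thm1:general} (applied with $\vP=\vI$, equivalently Corollary~\ref{thm:general}) identifies this same multipolar value $\Pi_\Xi(\vI, \Omega)$ with the optimal value of the best policy affine in $\xi$, which is exactly AARC. Chaining the two equalities yields $\mathrm{AARC} = \Pi_\Xi(\vI, \ext(\Xi)) = \mathrm{FARC}$.

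There is really no obstacle beyond checking that $\Omega = \ext(\Xi)$ is admissible in both interpretations at the same time; this is precisely the content of $\Xi$ being a simplex, since for any non-simplex convex body either $|\ext(\Xi)| > 1 + \dim(\Xi)$ (so Theorem~\ref{thm1:general} does not apply) or the extreme points fail to be affinely independent. Writing the argument as three lines invoking the two preceding results already established in the excerpt is, I believe, the cleanest presentation.
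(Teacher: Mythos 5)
Your proof is correct and follows essentially the same route as the paper: both take $\Omega=\ext(\Xi)$, observe that this pole-set realizes FARC, and then invoke Theorem~\ref{thm1:general} (via Corollary~\ref{thm:general}) to identify the same multipolar value with the best affine policy. Your version merely spells out the affine-independence and cardinality checks that the paper leaves implicit.
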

\begin{proof}
    Taking all the vertices of the simplex uncertainty set as the set of poles
    in multipolar robust approach leads to the optimum of FARC. By
    Corollary~\ref{thm:general}, this pole-set corresponds to affine adjustable
    approach, which completes the proof. 
\end{proof}
Corollary~\ref{cor:general} has been presented in \cite{BER2012} in the special
case of right-hand-side uncertainty, so we may treat the result here as an
alternative proof using the framework of multipolar approach.
\section{Analysis}\label{sec:analysis}
In this section, we first analyze the tractability of the multipolar robust
counterpart \ref{mpro}. Then, we show that the proposed framework can generate
a monotonic sequence converging to the fully adjustable robust value of FARC.
In fact, we will simultaneously generate a lower and an upper bound both
converging to the optimal value of FARC under some mild assumptions. 

\subsection{Tractability} \label{sec:tract}
In this section, we show that \ref{mpro} is computationally tractable. It can
be solved either by cut generation or using a compact reformulation.

First, a cutting plane algorithm for solving \ref{mpro} may be devised as
follows.  Assume that $\abs{\Omega}$ is finite and has a reasonable size. Given
a solution $(\vu,\vv)$, we have to check if there exists a pair of $\xi\in\Xi$
and $\vlambda^{\xi}\in\Lambda_{\xi}$ violating the constraints of \ref{mpro}.
This can be done by checking the sign of the optimum of each $i^{th}$ problem
\begin{subequations}\label{subpb}
\begin{align}
\max \limits_{\vlambda,\xi} \hspace{1em}& \vU_i \vu+\vV_i
\sum\limits_{\omega\in\Omega}\lambda^{\xi}_{\omega}\vv_{\omega} - b_i\\
\textrm{s.t.} \hspace{1em}&
\sum\limits_{\omega\in\Omega}\lambda^{\xi}_{\omega}\omega=\vP\xi,
\label{subpb:convex}\\
~~& \sum\limits_{\omega\in\Omega}\lambda^{\xi}_{\omega}=1, \label{subpb:unit}\\
~~& \lambda^{\xi}_{\omega}\ge 0,\;\;\omega\in\Omega,\\
~~& \xi\in\Xi, \label{subpb:uncertainty}
\end{align}
\end{subequations}
where $\vU_i$ and $\vV_i$ are the $i^{th}$ rows of $\vU$ and $\vV$. If it is positive, then constraint
\begin{equation}
\label{ineg}
  \vU_i \vu+\vV_i
  \sum\limits_{\omega\in\Omega}\hat\lambda^{\hat\xi}_{\omega}\vv_{\omega}\le b_i,
\end{equation}
needs to be added to the restricted problem, where $ (\hat\lambda^{\hat\xi},\;\hat\xi)$ solves \eqref{subpb}. Problem \eqref{subpb}
can generally be solved easily when $\Xi$ is polyhedral or ellipsoidal. In
these cases, by equivalence of separation and optimization \cite{GRO}, the
multipolar robust optimization  counterpart problem can also be solved in
polynomial time if the number of poles  $|\Omega|$ is polynomially bounded.

Second, we may solve \ref{mpro} by duality. It is sometimes
possible for several kinds of convex uncertainty sets to write a strong dual
of \eqref{subpb} leading to an extended reformulation of \ref{mpro}. This
holds for example if $\Xi$ is a polytope defined by a limited  number of
constraints, i.e., $\Xi:=\{\xi\equiv\lbrack \vU,\vb\rbrack: \vC\xi \leq \vd\}$,
where $\vC=\left[\vC_1,\dots, \vC_m\right]$, $\vC_i\in \rit^{n_d \times (n+1)}, d\in
\rit^{n_d}$ and $\xi$ is expressed as a column vector of size
$(n+1)\times m$. $\xi$ contains $m$ blocks of size $n+1$ vectors: the $i^{th}$
block contains $\vU_i^T$ followed by $b_i$. 
By strong duality, the constraints of the multipolar robust counterpart
\ref{mpro} w.r.t. $\Xi$ can be replaced with a polynomial number of
inequalities. For each $i$, the inequalities
$\vU_i\vu+\vV_i\sum\limits_{\omega\in\Omega}\lambda^{\xi}_{\omega}\vv_{\omega}\le b_i,
\xi\in\Xi,\;\vlambda^{\xi}\in\Lambda_{\xi}$ are replaced with 
\begin{equation}
\label{dual:polytope}
    \begin{split}
        & \vd^T\veta_i  +  \vV_i\vv_{\omega}-\omega^T\vsigma_i  \le 0, ~ \omega \in  \Omega, \\
             & \vC_j^T\eta_i - \vP^T_j\vsigma_i = \delta_{ij}\begin{pmatrix}
             \vu ,-1\end{pmatrix},  ~   j= 1,\dots,m,\\
             & \veta_i\in \rit^{n_d}_+,~\vsigma_i\in \rit^{n_0}, 
\end{split}
\end{equation}
where the shadow matrix $\vP=[\vP_1,\dots,\vP_j,\dots,\vP_m], \vP_j\in
\rit^{n_0\times{(n+1)}}, ~j=1,\dots,m$, $\delta_{ij}$ is Kronecker's delta function. \\

When $\Xi$ is ellipsoidal, i.e., $ \Xi:= \{\xi: \norm{\vF\xi}{2} \leq
1\}$, the multipolar robust counterpart can be represented by a second order
cone program. Then for each $i$, the $i^{th}$ constraint of \ref{mpro} is replaced with 
\begin{equation}\label{eq:counterpartball}
    \begin{split}
        \norm{\veta_i}{2}+\vV_i\vv_\omega - \omega^T\vsigma_i &\le 0 ,\\
        \vF^T\eta_i  - \vL_i &= \allzeros,\\
\veta_i \in \rit^{n_q},~\vsigma_i& \in \rit^{n_0}. 
\end{split}
\end{equation}
where $\vL_i=\begin{pmatrix}\vL_{i1},  \dots, \vL_{im}
\end{pmatrix}$, $ \vL_{ij} = \delta_{ij}\begin{pmatrix} \vu , -1\end{pmatrix} +
\vP_j^T\sigma_i, ~j=1,\dots,m$, $n_q$ is number of rows of matrix $\vF$.
For sake of completeness, a proof of \eqref{eq:counterpartball} is provided in Appendix. 

\subsection{Monotonicity}
We show in this section that the function $\Pi_{\Xi}(\vP,\cdot)$ is monotonic
w.r.t. a partial order defined on $\mathcal{F}_{\Xi_P}$ when the shadow matrix $\vP$ is fixed.

Given an uncertainty set $\Xi$, we now define a partial order over the collection
of its pole-sets $\mathcal{F}_{\Xi_P}$ denoted by
$\preceq_{\mathcal{F}_{\Xi_P}}$. We set members of $\mathcal{F}_{\Xi_P}$ ordered
by the inclusion of their convex hulls, i.e., for any $\Omega',\Omega \in
\mathcal{F}_{\Xi_P}$,
\begin{align}
    \Omega' \preceq_{\mathcal{F}_{\Xi_P}} \Omega \Longleftrightarrow  \conv
    \Omega' \subseteq \conv \Omega.
\end{align}
The next theorem emphasizes the fact that the function $\Pi_{\Xi}(\vP,\cdot)$ is monotonic
regarding the partial order $\preceq_{\mathcal{F}_{\Xi_P}}$ for each fixed $\vP\in
\rit^{n_0\times \dim(\Xi)}$. In other words, the multipolar value gets
smaller when $\Omega$ is smaller w.r.t. $\preceq_{\mathcal{F}_{\Xi_P}}$. 
\begin{theorem}  \label{theo:best}
Given $\vP\in \rit^{n_0\times \dim(\Xi)}$, for any $\Omega',\Omega \in
\mathcal{F}_{\Xi_P}$, if $\Omega' \preceq_{\mathcal{F}_{\Xi_P}} \Omega$, then
we have $\Pi_\Xi(\vP,\Omega') \le \Pi_\Xi(\vP,\Omega)$.
\end{theorem}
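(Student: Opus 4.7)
The plan is to take an optimal solution of the $\Omega$-MRC and explicitly construct a feasible solution of the $\Omega'$-MRC with the same objective value. Since the objective depends only on $\vu$, this will immediately yield $\Pi_\Xi(\vP,\Omega') \le \Pi_\Xi(\vP,\Omega)$.

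Let $(\vu^\star, \{\vv_\omega^\star\}_{\omega\in\Omega})$ be an optimal solution of the $\Omega$-MRC. Because $\conv \Omega' \subseteq \conv \Omega$, every $\omega' \in \Omega'$ admits a representation $\omega' = \sum_{\omega\in\Omega} \mu^{\omega'}_\omega \omega$ with $\mu^{\omega'}_\omega \ge 0$ and $\sum_{\omega\in\Omega} \mu^{\omega'}_\omega = 1$. Fix one such decomposition for each $\omega' \in \Omega'$ and define candidate recourse vectors for the poles of $\Omega'$ by
\begin{equation*}
\vv'_{\omega'} \,:=\, \sum_{\omega\in\Omega} \mu^{\omega'}_\omega \, \vv_\omega^\star, \qquad \omega' \in \Omega'.
\end{equation*}
I will show that $(\vu^\star, \{\vv'_{\omega'}\}_{\omega'\in\Omega'})$ is feasible for the $\Omega'$-MRC.

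Pick any $\xi \in \Xi$ and any weight vector $\vlambda'^{\xi} \in \Lambda_\xi'$, i.e.\ $\sum_{\omega'} \lambda'^{\xi}_{\omega'}\omega' = \vP\xi$, $\sum_{\omega'} \lambda'^{\xi}_{\omega'}=1$, $\lambda'^{\xi}_{\omega'} \ge 0$. Define the induced weights on $\Omega$ by
\begin{equation*}
\lambda^{\xi}_\omega \,:=\, \sum_{\omega'\in\Omega'} \lambda'^{\xi}_{\omega'}\, \mu^{\omega'}_\omega, \qquad \omega \in \Omega.
\end{equation*}
A direct check shows $\lambda^{\xi}_\omega \ge 0$, $\sum_\omega \lambda^{\xi}_\omega = \sum_{\omega'} \lambda'^{\xi}_{\omega'} \sum_\omega \mu^{\omega'}_\omega = 1$, and $\sum_\omega \lambda^{\xi}_\omega \omega = \sum_{\omega'}\lambda'^{\xi}_{\omega'} \omega' = \vP\xi$, so $\vlambda^{\xi} \in \Lambda_\xi$. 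By construction of $\vv'_{\omega'}$ and swapping the sums, we also have
\begin{equation*}
\sum_{\omega'\in\Omega'} \lambda'^{\xi}_{\omega'}\,\vv'_{\omega'} \,=\, \sum_{\omega\in\Omega} \lambda^{\xi}_\omega \, \vv_\omega^\star .
\end{equation*}

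Since $(\vu^\star, \{\vv_\omega^\star\})$ is feasible for the $\Omega$-MRC and $\vlambda^{\xi} \in \Lambda_\xi$, the right-hand side satisfies $\vU \vu^\star + \vV \sum_{\omega}\lambda^{\xi}_\omega \vv_\omega^\star \le \vb$ for every realization of $[\vU,\vb]=\xi$. The identity above then transfers this inequality to the $\Omega'$ formulation for the arbitrarily chosen $\xi$ and $\vlambda'^{\xi}$, establishing feasibility of $(\vu^\star, \{\vv'_{\omega'}\})$ for the $\Omega'$-MRC. As the objective value is the unchanged $\vc^T \vu^\star$, this gives $\Pi_\Xi(\vP,\Omega') \le \Pi_\Xi(\vP,\Omega)$. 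The only non-mechanical point is remembering that feasibility in \ref{mpro} is required for \emph{every} $\vlambda^{\xi} \in \Lambda_\xi$; the composed-weights identity handles this cleanly, so no real obstacle arises.
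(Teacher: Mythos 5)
Your proof is correct and follows essentially the same route as the paper: express each pole of $\Omega'$ as a convex combination of poles of $\Omega$, define the new recourse vectors as the corresponding convex combinations of the optimal ones, and keep $\vu$ unchanged. The only difference is that you spell out the feasibility check (composing the weights and verifying membership in $\Lambda_\xi$), which the paper simply asserts as clear; your verification is the right one and adds rigor without changing the argument.
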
 
\begin{proof}
    If $\left(\vu, ({\vv_{\omega}})_{\omega \in \Omega}
    \right)$ is an optimal solution of \ref{mpro}, then a feasible solution,
    when the set of poles is defined by $\Omega'$, is given as follows. Each
    $\omega' \in \Omega'$ writes as a convex combination of the poles of
    $\Omega$: $\omega' = \sum\limits_{\omega\in\Omega}
    \lambda^{\omega'}_{\omega} \omega$. Let $v_{\omega '} =
    \sum\limits_{\omega\in\Omega} \lambda^{\omega'}_{\omega}\vv_{\omega}$. The
    solution given by $\left(\vu, ({\vv_{\omega'}})_{\omega' \in \Omega'} \right)$
    is clearly feasible for \ref{mpro} w.r.t. the set of poles defined by
    $\Omega'$, which completes the proof.
\end{proof}
Theorem~\ref{theo:best} not only implies that the smaller the $ \Omega$  w.r.t.
$\preceq_{\mathcal{F}_{\Xi_P}}$,
the lower the multipolar robust value, but also implies that for a given $\vP$,
$\Pi_\Xi(\vP,\Omega)$ is minimum if $\Omega =\ext(\Xi_P)$. 

Given two pole-sets $\Omega, \Omega'\in \mathcal{F}_{\Xi_P}$,
Theorem~\ref{theo:best} also indicates that: first, for a fixed shadow matrix
$\vP$, if $\abs{\Omega}> \abs{\Omega'}$, then $\Pi_\Xi(\vP,\Omega)$ is not
necessarily less than $\Pi_\Xi(\vP,\Omega')$; second, the function
$\Pi_\Xi(\vP,\cdot)$ is not strictly monotonically increasing. For example, let
$S,S'\in \mathcal{F}_{\Xi_P}, S'\preceq_{\mathcal{F}_{\Xi_P}} S$ and their
convex hulls are simplices. By Theorem~\ref{thm1:general}, $\Pi_\Xi(\vP,S') =
\Pi_\Xi(\vP,S)$ while by Theorem~\ref{theo:best}, $\Pi_\Xi(\vP,S') \le
\Pi_\Xi(\vP,S)$, which illustrates the second point. Now take any pole-set
$\Omega$ whose  cardinality is strictly greater than $1+\dim(\Xi_P)$, such that
$S'\preceq_{\mathcal{F}_{\Xi_P}}\Omega\preceq_{\mathcal{F}_{\Xi_P}}S$, then
$\Pi_\Xi(\vP,S') = \Pi_\Xi(\vP,\Omega) = \Pi_\Xi(\vP,S)$, which illustrates the first
point.

Observe also that when $\vP = \vI$,  any pole-set whose convex hull contains $\Xi$ is  contained in a simplex. 
This immediately implies that  the optimal value of AARC  represents the worst that can be obtained by the multipolar approach.

\subsection{Convergence} 
\label{sec:converg}
The aim of this section is to show that under some mild assumptions, using the
multipolar framework, one can simultaneously compute a sequence of upper bounds
and a sequence of lower bounds converging to $\Pi_\Xi\left(\vI,\ext\left(\Xi\right)\right)$, the optimal
robust value of FARC. Throughout this section the shadow matrix is the identity matrix.
\begin{definition}
    Let $\Omega\in \mathcal{F}_{\Xi}$ be a pole-set of a non-empty set
    $\Xi$, the distance function between them is defined as $d\left(\Omega,\Xi\right) =
    \max\limits_{\omega\in \Omega}\min\limits_{\xi \in \Xi}  ~
    \norm{\omega-\xi}{2}.$
\end{definition}
The distance function is well-defined since $\Omega$ and $\Xi$ are closed and
bounded. It characterizes the furtherest distance between pole-set $\Omega$ and the
uncertainty set $\Xi$. This distance  is nothing other than the well-known Hausdorff distance. 

Let $\Omega \in \mathcal{F}_{\Xi}$ such that $d\left(\Omega,\Xi\right) = \epsilon$. 
For each $\omega\in \Omega $, we have $d\left(\Omega,\Xi\right)\le \epsilon$. 
Let $\vz_\omega$ be the projection of $\omega$ on  $\Xi$, i.e.,
\begin{align}\label{eq:condition1}
    \vz_{\omega}=\argmin\limits_{\vx\in \Xi} ~d\left(\omega,x\right),\hspace{2em}
    ~\ve_\omega = \omega -\vz_\omega, 
\end{align}
where $\norm{\ve_\omega}{2}\le \epsilon$. For each $\xi \in \Xi$, consider convex
combination coefficients $\left(\beta^{\xi}_{\omega}\right)$ such that 
\begin{align}
\label{eq:condition2}
  \xi=    \sum\limits_{\omega\in\Omega}\beta^{\xi}_{\omega}\omega, \hspace{2em} \mbox{and let}  \hspace{2em} 
 \vE = \sum\limits_{\omega\in\Omega}\beta^{\xi}_{\omega}\ve_\omega.
\end{align}

Let us add subscripts to avoid confusion: $\xi\equiv\lbrack
\vU_{\xi},\vb_{\xi}\rbrack$, $\vE \equiv\lbrack \vU_{E},\vb_{E}\rbrack$  and
$\vz_{\omega} \equiv\lbrack \vU_{\vz_\omega},\vb_{\vz_\omega}\rbrack$. We define the convex set 
 \begin{equation}
\Xi'_{z_{\Omega}} = \conv \left\{\vz_{\omega}: \omega \in \Omega \right\}.
\label{def:xi'}
\end{equation}
We obviously have $\Xi'_{z_{\Omega}}  \subseteq \Xi$.  Let
$(\vu^*,{(\vv^*_{\vz_{\omega}})}_{\omega \in \Omega})$ be the optimal solution of the
MRC problem related to $\Xi'_{z_{\Omega}}$. Due to the definition of
$\Xi'_{z_{\Omega}}$, MRC and FARC are equivalent. Moreover, from $\Xi'_{z_{\Omega}} \subseteq \Xi$, we get that  
\begin{align*}
\vc^T \vu^* =  \Pi_{\Xi'_{z_{\Omega}}}(\vI,\ext(\Xi'_{z_{\Omega}}))\le
\Pi_{\Xi}(\vI,\ext(\Xi)).
\end{align*}
We will also assume that there is a positive number $\mu$ such that
$\norm{(\vu^*,1)}{2} \leq \mu$. This assumption generally holds. For example, if
the cost vector $\vc$ is positive and variables $\vu$ are non-negative, then 
$\vc^T \vu^* \leq \Pi_{\Xi}(\vI,\ext(\Xi))$
implies that  $\norm{(\vu^*,1)}{2}$ is
upper-bounded. The number $\mu$ does not depend on $\epsilon$.
\begin{assumption}
\label{assump:bound}
There exists a constant number $\mu$ such that  $\norm{(\vu^*,1)}{2} \le
\mu$ for any $\Xi'_{z_{\Omega}} \subseteq \Xi$ and any optimal solution $\vu^*$ of
the FARC problem related to $\Xi'$.
\end{assumption}
 \begin{lemma} \label{lemma:1}
    Under Assumption~\ref{assump:bound}, for each $\xi \in \Xi$,  $\left(\vU_{\vE}
    \vu^* - \vb_{\vE} \right)$ is bounded from above by $\epsilon \mu \allones$, where
    $\allones$ is an all-ones vector. 
\end{lemma}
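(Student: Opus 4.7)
The plan is to exploit the linearity of the map $\xi \mapsto \vU_{\xi}\vu^* - \vb_{\xi}$ and then apply Cauchy--Schwarz row by row, using the norm bound on $(\vu^*,1)$ given by Assumption~\ref{assump:bound} together with the fact that each $\ve_\omega$ has Euclidean norm at most $\epsilon$.

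First, I would unwind the definition $\vE = \sum_{\omega \in \Omega} \beta^{\xi}_\omega \ve_\omega$ together with the decomposition $\ve_\omega \equiv [\vU_{\ve_\omega}, \vb_{\ve_\omega}]$. Since this decomposition commutes with taking convex combinations, $\vU_{\vE} = \sum_\omega \beta^\xi_\omega \vU_{\ve_\omega}$ and $\vb_{\vE} = \sum_\omega \beta^\xi_\omega \vb_{\ve_\omega}$. Consequently, for each row $i$,
\begin{equation*}
(\vU_{\vE} \vu^* - \vb_{\vE})_i \;=\; \sum_{\omega \in \Omega} \beta^\xi_\omega \bigl((\vU_{\ve_\omega})_i \vu^* - (\vb_{\ve_\omega})_i\bigr).
\end{equation*}

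Next, I would observe that each summand can be rewritten as an inner product with $(\vu^*,-1)$. Letting $\ve_\omega^{(i)}$ denote the $i$-th row of the matrix representation of $\ve_\omega$, we have $(\vU_{\ve_\omega})_i \vu^* - (\vb_{\ve_\omega})_i = \langle \ve_\omega^{(i)}, (\vu^*,-1)\rangle$. By Cauchy--Schwarz and the fact that the $\ell_2$ norm of a single row of a matrix is at most the $\ell_2$ norm of the whole matrix,
\begin{equation*}
\bigl|\langle \ve_\omega^{(i)}, (\vu^*,-1)\rangle\bigr| \;\le\; \norm{\ve_\omega^{(i)}}{2}\cdot \norm{(\vu^*,-1)}{2} \;\le\; \norm{\ve_\omega}{2}\cdot \norm{(\vu^*,1)}{2} \;\le\; \epsilon\mu,
\end{equation*}
where the last inequality combines $\norm{\ve_\omega}{2}\le \epsilon$ (from \eqref{eq:condition1}) with Assumption~\ref{assump:bound}.

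Finally, since the $\beta^\xi_\omega$ are nonnegative and sum to $1$, I would simply take the convex combination of the per-pole bounds to conclude $(\vU_{\vE}\vu^* - \vb_{\vE})_i \le \epsilon\mu$ for every $i$, which is the claimed componentwise inequality $\vU_{\vE}\vu^* - \vb_{\vE} \le \epsilon\mu\allones$. The main obstacle I anticipate is purely bookkeeping: one must be careful with the dual role of $\xi$ (and hence of $\ve_\omega$ and $\vE$) as both a vector in $\rit^{m(n+1)}$ and as a matrix $[\vU,\vb]$, so that the identification of the $i$-th row of $\vE$ with the $i$-th coordinate block of the corresponding vector and with the relevant slice of $\ve_\omega$ is done consistently. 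Once that identification is made, the argument reduces to a single application of Cauchy--Schwarz.
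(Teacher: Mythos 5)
Your proposal is correct and is essentially the paper's argument: the paper's proof is a one-line invocation of Cauchy--Schwarz applied to each row of $\vE \equiv [\vU_E,\vb_E]$ (implicitly using $\norm{\vE}{2}\le\epsilon$ via the convex combination of the $\ve_\omega$), and you have simply made the bookkeeping explicit by applying Cauchy--Schwarz per pole before averaging rather than after. The two orderings are interchangeable here, so there is nothing to add.
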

\begin{proof}
     The result follows from Cauchy-Schwartz
    inequality applied to each row of $\vE \equiv\lbrack \vU_{E},\vb_{E}\rbrack$.
\end{proof}
Let $\delta$ be a small positive number and let 
\begin{equation}
\Xi_\delta = \left\{\xi \equiv \lbrack \vU, \vb\rbrack: \exists \xi'\equiv \lbrack
\vU,\vb' \rbrack \in  \Xi, \norm{\vb-\vb'}{\infty} \le \delta\right\}.
\end{equation} Observe that if $\xi
\equiv \left[\vU, \vb\right] \in \Xi$, then $ \lbrack \vU,\vb - \delta \allones\rbrack \in
\Xi_\delta$. We will assume that for some small number $\delta$, the static
robust counterpart problem SRC is still solvable. 
\begin{assumption}
There exists a static robust solution $(\vu_{\delta}, \vv_{\delta})$ w.r.t. uncertainty set $\Xi_\delta$.
\label{assump:stat}
\end{assumption}
\begin{theorem}\label{thm:convergence}
 Under Assumptions~\ref{assump:bound} and~\ref{assump:stat}, for each
 pole-set $\Omega \in \mathcal{F}_\Xi$ such that $d(\Omega,\Xi)=\epsilon \le
 \frac{\delta}{\mu}$, we have
    \begin{align*} 
    \Pi_{\Xi}(\vI,\Omega) \le \left(1-\frac{\epsilon\mu}{\delta}\right)\vc^T\vu^*
    +\frac{\epsilon\mu}{\delta} \vc^T \vu_{\delta},
    \end{align*} 
    where $\vc^T \vu^*$ and $\vc^T
    \vu_\delta$ are respectively fully adjustable robust cost w.r.t. $\Xi'_{z_{\Omega}}$ and  static cost w.r.t. $\Xi_\delta$.
\end{theorem}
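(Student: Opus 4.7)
The plan is to construct an explicit feasible solution of \ref{mpro} on $\Xi$ with pole-set $\Omega$ whose objective value matches the right-hand side of the claimed inequality; minimality of $\Pi_\Xi(\vI,\Omega)$ then completes the argument. The natural candidate is a convex blend of the two available solutions: with mixing weight $t = \epsilon\mu/\delta \in [0,1]$ (the upper bound holds by $\epsilon \le \delta/\mu$), set
\[
\vu = (1-t)\,\vu^{*} + t\,\vu_{\delta}, \qquad \vv_\omega = (1-t)\,\vv^{*}_{\vz_\omega} + t\,\vv_{\delta}\quad (\omega \in \Omega).
\]
The objective value of this pair is exactly $(1-t)\vc^{T}\vu^{*} + t\,\vc^{T}\vu_{\delta}$, so the only remaining task is to verify feasibility.

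To this end, I fix any $\xi \equiv \lbrack \vU_\xi,\vb_\xi\rbrack \in \Xi$ and any $\vlambda^\xi \in \Lambda_\xi$. Using the decomposition $\omega = \vz_\omega + \ve_\omega$ from \eqref{eq:condition1}, I split
\[
\xi = \xi' + \vE^{\lambda},\quad \xi' = \sum\limits_{\omega\in\Omega}\lambda^\xi_\omega \vz_\omega \in \Xi'_{z_{\Omega}},\quad \vE^\lambda \equiv \lbrack \vU_{E^\lambda},\vb_{E^\lambda}\rbrack = \sum\limits_{\omega\in\Omega}\lambda^\xi_\omega \ve_\omega.
\]
Since the weights $(\lambda^\xi_\omega)_{\omega\in\Omega}$ remain a valid convex representation of $\xi'$ relative to the pole-set $\{\vz_\omega\}$, the feasibility of $(\vu^*, \vv^{*}_{\vz_\omega})$ for the inner MRC on $\Xi'_{z_{\Omega}}$ gives $\vU_{\xi'}\vu^{*} + \vV \sum_{\omega}\lambda^\xi_\omega \vv^{*}_{\vz_\omega} \le \vb_{\xi'}$. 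Writing $\vU_{\xi'} = \vU_\xi - \vU_{E^\lambda}$ and $\vb_{\xi'} = \vb_\xi - \vb_{E^\lambda}$, and invoking Lemma~\ref{lemma:1} with the weights $\lambda^\xi$ (its Cauchy--Schwartz argument only uses $\sum \lambda^\xi_\omega = 1$, $\lambda^\xi \ge \allzeros$, and $\norm{\ve_\omega}{2} \le \epsilon$), I obtain
\[
\vU_\xi \vu^{*} + \vV \sum\limits_{\omega\in\Omega}\lambda^\xi_\omega \vv^{*}_{\vz_\omega} \le \vb_\xi + \epsilon\mu\,\allones.
\]

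For the static leg, the point $\lbrack \vU_\xi, \vb_\xi - \delta\,\allones\rbrack$ belongs to $\Xi_\delta$, so Assumption~\ref{assump:stat} delivers $\vU_\xi \vu_\delta + \vV \vv_\delta \le \vb_\xi - \delta\,\allones$. Forming the $(1-t)$/$t$ convex combination of the two inequalities (and using $\sum_\omega \lambda^\xi_\omega = 1$ to factor $\vV\vv_\delta$ out of the sum) produces
\[
\vU_\xi \vu + \vV \sum\limits_{\omega\in\Omega}\lambda^\xi_\omega \vv_\omega \le \vb_\xi + \bigl[(1-t)\epsilon\mu - t\delta\bigr]\,\allones.
\]
With $t = \epsilon\mu/\delta$, the bracketed slack equals $-\epsilon^{2}\mu^{2}/\delta \le 0$, which establishes feasibility and hence the theorem.

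The main obstacle I anticipate is the book-keeping between the two MRC problems: the inner problem lives on $\Xi'_{z_{\Omega}}$ with poles $\vz_\omega$, while the outer problem lives on $\Xi$ with poles $\omega$. Care is needed to certify that the same vector $\vlambda^\xi$ is simultaneously admissible in both worlds, and that Lemma~\ref{lemma:1}, even though written for a generic $\beta^\xi$, applies verbatim to $\lambda^\xi$. Once that is granted, the rest is algebra controlled by the choice $t = \epsilon\mu/\delta$.
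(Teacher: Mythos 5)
Your proposal is correct and follows essentially the same route as the paper: the same convex blend of the FARC solution on $\Xi'_{z_{\Omega}}$ with the static solution on $\Xi_\delta$, the same decomposition of $\xi$ via the projections $\vz_\omega$ and the error term, the same appeal to Lemma~\ref{lemma:1}, and the same final slack computation $(1-t)\epsilon\mu - t\delta = -\epsilon^2\mu^2/\delta$. If anything, your version is slightly more careful than the paper's in verifying the constraint for \emph{every} $\vlambda^{\xi}\in\Lambda_{\xi}$ rather than for one particular choice of weights.
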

\begin{proof} 
Assume that the  optimal solution of FARC w.r.t. uncertainty set $\Xi'_{z_{\Omega}}$ 
is $\left(\vu^*, (\vv^*_{\vz_{\omega}})\right)$. Consider the solution
\begin{align}
    \hat \vu=\left(1-\frac{\epsilon\mu}{\delta}\right)\vu^* +\frac{\epsilon\mu}{\delta} \vu_\delta,
    \hspace{2em} \hat \vv_\omega =\left(1-\frac{\epsilon\mu}{\delta}\right)
    \vv^*_{\vz_\omega}+\frac{\epsilon\mu}{\delta} \vv_\delta.
\end{align}
Let us show that $(\hat \vu, \hat \vv_\omega)$ is a feasible solution of the
MRC problem related to $\Xi$ and $\Omega$. For  any $\xi\equiv
\left[\vU,\vb \right] \in \Xi$, by \eqref{eq:condition2}, one can write: 
\begin{align}
 \vU_{\xi}\hat \vu + \vV\sum\limits_{\omega \in \Omega} \beta^\xi_\omega \hat \vv_\omega
  = ~ & \vU_{\xi} \hat \vu + \left(1-\frac{\epsilon\mu}{\delta}\right) \vV\sum\limits_{\omega\in \Omega}
 \beta^\xi_\omega  \vv_{\vz_{\omega}}^*+\frac{\epsilon\mu}{\delta} \vV\vv_\delta \notag \\
 \le~ &\vU_{\xi} \hat \vu +\left(1-\frac{\epsilon\mu}{\delta}\right) \sum\limits_{\omega \in \Omega}
 \beta^\xi_\omega \left(\vb_{\vz_{\omega}}-\vU_{\vz_{\omega}
 }\vu^*\right)  \notag \\
 &\hspace{1.8em}+\frac{\epsilon\mu}{\delta} \left(\vb_{\xi}-\delta \allones -\vU_{\xi}\vu_\delta\right)
    \label{eq:equation1}\\
   =~ &  \vU_{\xi}\hat \vu  +\left(1-\frac{\epsilon\mu}{\delta}\right) 
    \left(\vb_{\xi} - \vb_{\vE}  +\vU_{\vE} \vu^*- \vU_{\xi}\vu^*
    \right) \notag \\
    &\hspace{1.8em}+\frac{\epsilon\mu}{\delta} \left(\vb_{\xi} -\delta \allones -\vU_{\xi}\vu_\delta\right) \label{eq:equation2} \\
   =~ & \left(1-\frac{\epsilon\mu}{\delta}\right)\left(\vU_{\vE} \vu^* - \vb_{\vE}\right)-{\epsilon\mu}
  \allones +\vb_{\xi}  \notag\\
  \le ~& \left(1-\frac{\epsilon\mu}{\delta}\right)  \epsilon \mu \allones  -{\epsilon\mu}
 \allones +\vb_{\xi} \label{eq:equation22}\\
   =  ~&  \vb_{\xi}  -\frac{\epsilon^2\mu^2}{\delta} \allones \notag\\
 \le ~& \vb_{\xi}, \notag
\end{align}
where \eqref{eq:equation1} follows from the fact that $\left(\vu^*,\vv_{\vz_{\omega}}^*\right)$ satisfies 
constraint $\vU\vu+\vV\vv\le \vb$ for $\vz_{\omega}=\left[\vU_{\vz_{\omega}},\vb_{\vz_{\omega}}\right]^T$ and the 
static solution $\left(\vu_\delta, \vv_\delta\right)$ satisfies $\vU_{\xi}\vu+\vV\vv\le \vb_{\xi}-\delta \allones$, 
\eqref{eq:equation2} follows from \eqref{eq:condition1} and
\eqref{eq:condition2}, and \eqref{eq:equation22} is due to Lemma~\ref{lemma:1}.

The robust cost incurred by $\left(\hat \vu,\left(\hat \vv_\omega\right)\right)$ is
$\left(1-\frac{\epsilon\mu}{\delta}\right)\vc^T\vu^* +\frac{\epsilon\mu}{\delta}
\vc^T\vu_{\delta}$ and is an upper bound of the optimum of \ref{mpro}.
\end{proof}
\begin{corollary}\label{cor:twosequence}
Given any sequence of pole-sets $\Omega_i \in \mathcal{F}_{\Xi}$ such that
$\lim\limits_{i \rightarrow \infty}  d\left(\Omega_i, \Xi\right) =0$, then under
Assumptions~\ref{assump:bound} and~\ref{assump:stat},
\begin{align*}
\Pi_\Xi\left(\vI,\Omega_i\right) \geq \Pi_\Xi\left(\vI,\ext\left(\Xi\right)\right)
~\textrm{and}~
\lim\limits_{i \rightarrow \infty}  \Pi_\Xi\left(\vI,\Omega_i\right)= \Pi_\Xi\left(\vI,\ext\left(\Xi\right)\right).
\end{align*}
Moreover, the corresponding sequence of sets $\Xi'_{z_{\Omega_i}}$ defined in
\eqref{def:xi'} satisfies 
\begin{align*}
\Pi_{\Xi'_{z_{\Omega_i}}}\left(\vI, \ext\left(\Xi'_{z_{\Omega_i}}\right)\right) \leq \Pi_\Xi\left(\vI,\ext\left(\Xi\right)\right)
~\textrm{and}~
 \lim\limits_{i \rightarrow \infty}  \Pi_{\Xi'_{z_{\Omega_i}}}\left(\vI, \ext\left(\Xi'_{z_{\Omega_i}}\right)\right) = \Pi_\Xi\left(\vI,\ext\left(\Xi\right)\right).
 \end{align*}
\end{corollary}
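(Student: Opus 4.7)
The plan is to deduce the corollary from the three preceding results: the monotonicity theorem (Theorem~\ref{theo:best}), the convergence estimate (Theorem~\ref{thm:convergence}), and the observation, already recorded just before Assumption~\ref{assump:bound}, that the MRC value on $\Xi'_{z_{\Omega}}$ coincides with the FARC value on that shrunken set. Writing $\epsilon_i=d(\Omega_i,\Xi)\to 0$, the four claims split naturally into two pairs (an inequality plus a limit), and each pair needs only a short argument.

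\textbf{Upper bound sequence.} For the inequality $\Pi_\Xi(\vI,\Omega_i)\ge \Pi_\Xi(\vI,\ext(\Xi))$, I would note that $\ext(\Xi)\in\mathcal{F}_\Xi$ and that $\conv(\ext(\Xi))=\Xi\subseteq\conv\Omega_i$, so $\ext(\Xi)\preceq_{\mathcal{F}_\Xi}\Omega_i$ and Theorem~\ref{theo:best} applies directly. For the limit, I would invoke Theorem~\ref{thm:convergence} with the admissible $\epsilon=\epsilon_i$ (valid once $\epsilon_i\le \delta/\mu$, which holds for all sufficiently large $i$), obtaining
\begin{equation*}
\Pi_\Xi(\vI,\Omega_i)\;\le\;\Bigl(1-\tfrac{\epsilon_i\mu}{\delta}\Bigr)\vc^T\vu^*_i+\tfrac{\epsilon_i\mu}{\delta}\vc^T\vu_\delta,
\end{equation*}
where $\vu^*_i$ is optimal for FARC on $\Xi'_{z_{\Omega_i}}$. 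Since $\vc^T\vu^*_i\le \Pi_\Xi(\vI,\ext(\Xi))$ (because $\Xi'_{z_{\Omega_i}}\subseteq \Xi$) and $\vc^T\vu_\delta$ is a fixed finite number by Assumption~\ref{assump:stat}, passing to the limit $\epsilon_i\to 0$ yields $\limsup_i \Pi_\Xi(\vI,\Omega_i)\le \Pi_\Xi(\vI,\ext(\Xi))$, which combined with the lower inequality gives the stated convergence.

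\textbf{Lower bound sequence.} The inequality $\Pi_{\Xi'_{z_{\Omega_i}}}(\vI,\ext(\Xi'_{z_{\Omega_i}}))\le \Pi_\Xi(\vI,\ext(\Xi))$ follows because any FARC solution for $\Xi$ is automatically feasible for FARC on the subset $\Xi'_{z_{\Omega_i}}\subseteq\Xi$. For the limit, I would use the sandwich
\begin{equation*}
\vc^T\vu^*_i\;=\;\Pi_{\Xi'_{z_{\Omega_i}}}(\vI,\ext(\Xi'_{z_{\Omega_i}}))\;\le\;\Pi_\Xi(\vI,\ext(\Xi))\;\le\;\Pi_\Xi(\vI,\Omega_i),
\end{equation*}
together with the upper estimate from Theorem~\ref{thm:convergence}, which rearranges to
\begin{equation*}
\Pi_\Xi(\vI,\Omega_i)-\vc^T\vu^*_i\;\le\;\tfrac{\epsilon_i\mu}{\delta}\bigl(\vc^T\vu_\delta-\vc^T\vu^*_i\bigr).
\end{equation*}
Assumption~\ref{assump:bound} ensures that $\vc^T\vu^*_i$ stays bounded uniformly in $i$, so the right-hand side tends to $0$; squeezing both $\vc^T\vu^*_i$ and $\Pi_\Xi(\vI,\Omega_i)$ toward the common value $\Pi_\Xi(\vI,\ext(\Xi))$ finishes this part.

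\textbf{Main obstacle.} The only subtlety I anticipate is verifying that Theorem~\ref{thm:convergence} may be legitimately applied along the sequence, i.e.\ that the constants $\mu$ and $\delta$ can be chosen uniformly in $i$. This is precisely what Assumption~\ref{assump:bound} is designed for, and Assumption~\ref{assump:stat} supplies a single static solution $\vu_\delta$ independent of $i$; once these are in place, the remainder of the argument is the squeeze described above. Everything else reduces to bookkeeping around $\epsilon_i\to 0$.
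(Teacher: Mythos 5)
Your proposal is correct and follows essentially the same route as the paper: both rest on the estimate of Theorem~\ref{thm:convergence} combined with the sandwich $\Pi_{\Xi'_{z_{\Omega_i}}}(\vI,\ext(\Xi'_{z_{\Omega_i}}))\le\Pi_\Xi(\vI,\ext(\Xi))\le\Pi_\Xi(\vI,\Omega_i)$ and then let $\epsilon_i\to 0$. You merely spell out a few details the paper leaves implicit (the monotonicity justification via Theorem~\ref{theo:best}, the requirement $\epsilon_i\le\delta/\mu$ for large $i$, and the uniform boundedness of $\vc^T\vu_i^*$), which is fine.
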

\begin{proof}
Let $\epsilon_i = d\left(\Omega_i, \Xi\right), \forall i$. From Theorem~\ref{thm:convergence}, we have
\begin{align*}
\Pi_\Xi\left(\vI,\Omega_i\right)  \le \left(1-\frac{\epsilon_i\mu}{\delta}\right)\Pi_{\Xi'_{z_{\Omega_i}}}
\left(\vI,\ext\left(\Xi'_{z_{\Omega_i}}\right)\right)  + \frac{\epsilon_i\mu}{\delta} \vc^T \vu_{\delta}, ~
\forall i,
\end{align*}
and we know that 
\begin{align*}
\Pi_{\Xi'_{z_{\Omega_i}}} \left(\vI,\ext\left(\Xi'_{z_{\Omega_i}}\right)\right)  \le \Pi_\Xi\left(\vI,\ext\left(\Xi\right)\right)
\le\Pi_\Xi\left(\vI,\Omega_i\right).
\end{align*}
Consequently, 
\begin{align*}
\lim\limits_{i \rightarrow \infty}  \Pi_{\Xi'_{z_{\Omega_i}}}
\left(\vI,\ext\left(\Xi'_{z_{\Omega_i}}\right)\right) = \Pi_\Xi\left(\vI,\ext\left(\Xi\right)\right) ~\textrm{and} ~
\lim\limits_{i \rightarrow \infty}  \Pi_\Xi\left(\vI,\Omega_i\right)= \Pi_\Xi\left(\vI,\ext\left(\Xi\right)\right)
\end{align*}
hold in the limit at the same time. 
\end{proof}

\section{An illustrative example} \label{sec:example}
To illustrate the multipolar concept, we present a simple example, which had
been previously studied in \cite{CHEN} and is as follows: 
\begin{equation}
\label{eq_example}
\begin{array}{lll}
    \min \hspace{1em} &u \\
    \textrm{s.t.} \hspace{1em} & \forall \xi \in \rit^{n},\norm{\xi}{1} \leq 1, &
    \exists \vv,   v_i \geq \xi_i, v_i \geq - \xi_i, i=1,\ldots,n \\
&  &u \geq \sum\limits_{i=1}^{n} v_i.
\end{array}
\end{equation}
Observe that $u$ is here the unique first-stage (non adjustable) variable. On
the other hand $v_i$, for each $i=1,\ldots,n$, are second-stage (adjustable)
variables.  The uncertainty set is given by $\Xi \equiv \left\{\xi \in
\rit^{n},\norm{\xi}{1} \leq 1 \right\}$.

As noticed in \cite{CHEN}, an optimal fully adjustable solution is given by $u = 1$ and
$v_i = \norm{\xi_i}{1}$, whereas the optimal affinely adjustable solution requires that
$u = n$. In other words,  the affine approach does not lead to any improvement
compared to the static approach. 

Following the paradigm of multipolar approach in Section \ref{sec:concept}, let
us take $\vP \xi = \left(\xi_1,\ldots,\xi_{n_0}\right)$, where $n_0\in
\mathbb{N}, n_0 \leq n$. In other words, the shadow matrix $\vP$ limits the
dimension of $\Xi$ to $n_0$ by leaving $\xi_i$ as they are for $i\leq n_0$ and
disregarding the other components for $i>n_0$. Let $\Omega \subseteq
\rit^{n_0}$  be the set of poles containing for $i=1,\dots,n_0,$ vectors
${\phi}^i = \left(0,\ldots,0,1,0,\ldots,0\right)$ and $\overline{\phi}^i  =
-\phi^i$, whose components are $0$ except the $i^{th}$ component. Hence
$\Omega$ contains $2n_0$ poles and $\Xi_P = \conv \Omega$.

Given any $\xi \subseteq \Xi$, let $\lambda_{{\phi}^i }$  and
$\lambda_{\overline{\phi}^i }$ be the convex combination coefficients such that
$\vP \xi = \sum\limits_{i=1}^{n_0}  \left(\lambda_{{\phi}^i }  {\phi}^i  +
\lambda_{\overline{\phi}^i } \overline{\phi}^i\right) $. The equation can be
transformed to  $\vP \xi = \sum\limits_{i=1}^{n_0} {\phi}^i
\big(\lambda_{{\phi}^i } \allowbreak - \lambda_{\overline{\phi}^i }\big) $; thus these
coefficients should satisfy the equations $\lambda_{{\phi}^i } -
\lambda_{\overline{\phi}^i } = \xi_i$ for $1 \leq i \leq n_0$. Let
$\vv_{{\phi}^i }$ (resp. $\vv_{\overline{\phi}^i }$) be the recourse vector
associated with pole ${{\phi}^i }$ (resp.  ${\overline{\phi}^i }$).  These
vectors belong to $\rit^{n}$.

In the considered example, inequalities \eqref{ineg} are equivalent to the
following set of inequalities: 
\begin{subequations}
\begin{align}
\sum\limits_{i=1}^{n_0}  \left(\lambda_{{\phi}^i }    \vv_{{\phi}^i}  +
\lambda_{\overline{\phi}^i }  \vv_{\overline{\phi}^i}\right) \geq
\left(\abs{\xi_1},\abs{\xi_2},\ldots,\abs{\xi_n}\right)  \label{v_eq}, \\
u \geq \norm{ \sum\limits_{i=1}^{n_0}  \left(\lambda_{{\phi}^i }    \vv_{{\phi}^i}  +
\lambda_{\overline{\phi}^i }   \vv_{\overline{\phi}^i}\right)}{1}.\label{u_eq}
\end{align}
\end{subequations}

Let us take $ \vv_{{\phi}^i}  = \vv_{\overline{\phi}^i} =
(0,\ldots,0,1,0,\ldots,0,1,\ldots,1)$, where the first $n_0$ components are
$0$ except the $i^{th}$ component, which is equal to $1$, while the last $(n -
n_0)$ components are equal to $1$.

Observe that  the last $\left(n - n_0\right)$ components of the vector
$\sum\limits_{i=1}^{n_0}  \left(\lambda_{{\phi}^i }    \vv_{{\phi}^i}  +
\lambda_{\overline{\phi}^i }   \vv_{\overline{\phi}^i}\right) $ are equal to $1$.
Moreover, for $1 \leq i \leq n_0$, we have $ \lambda_{{\phi}^i }  +
\lambda_{\overline{\phi}^i } \geq |\lambda_{{\phi}^i } -
\lambda_{\overline{\phi}^i }  | = |\xi_i |$. This clearly implies that
inequalities \eqref{v_eq} are satisfied. In addition, inequality \eqref{u_eq}
leads to $u \geq  \sum\limits_{i=1}^{n_0}  \left(\lambda_{{\phi}^i }  +
\lambda_{\overline{\phi}^i }  \right) \left(1 + n - n_0\right)$, where $\left(1 + n - n_0\right)$ is the
$L^1$ norm of each recourse vector $ \vv_{{\phi}^i}$.  Consequently, $u \geq 1 +
n - n_0 $.  Since we are minimizing $u$, we  get $u = 1 + n - n_0$. The cost
decreases when $n_0$ increases. When $n_0$ is equal to $1$,  we get a static
solution, while the optimal fully adjustable solution is obtained when $n_0 = n$.
Finally, taking $1<n_0<n$, we obtain a compromise between the simplicity of the
static approach and the efficiency of the fully adjustable solution. As mentioned
earlier, such a compromise cannot be obtained for this example with the
affinely adjustable approach.

Consider now a slightly changed example with the uncertainty set being the
non-polyhedral set defined by $\Xi := \{\xi \in \rit^{n}: \norm{\xi}{2} \leq 1\}$. 
 The rest of the
problem remains as in \eqref{eq_example}; thus the new problem can be
formulated as follows:

\begin{equation}
\label{eq_example2}
\begin{array}{lll}
    \min  \hspace{1em}&u \\
    \textrm{s.t.} \hspace{1em} & \forall \xi \in \rit^{n},||\xi||_2 \leq 1, &
    \exists \vv,   v_i \geq \xi_i, v_i \geq - \xi_i, i=1,\ldots,n,\\
     &  &u \geq  \sum\limits_{i=1}^{n} v_i.
\end{array}
\end{equation}
Observe that since the new uncertainty set $\Xi$ contains the previous one
based on $L^1$ norm, the optimal value of \eqref{eq_example2} is greater than
or equal to that of \eqref{eq_example}. Optimal solutions based on either the static
approach or the affine approach still incur a cost of $n$ while the optimal
fully adjustable solution has a cost of $\sqrt{n}$.
Let us now consider the multipolar approach, where $\vP$ is still defined by $P
\xi = (\xi_1,\ldots,\xi_{n_0})$. Let us choose the following set of poles:
$\Omega= \{\sqrt{n_0} {\phi}^i\}_{i=1}^{i=n_0} \cup \{\sqrt{n_0}
\overline{\phi}^i \}_{i=1}^{i=n_0}$. One can easily show that $ \Xi_P \subseteq
\conv(\Omega)$. Moreover, by taking $ \vv_{\sqrt{n_0} {\phi}^i}  =
\vv_{\sqrt{n_0}\overline{\phi}^i } = (0,\dots, \allowbreak 0, \sqrt{n_0}, 0, \dots, 0,
1, \dots, 1)$, where the first $n_0$ components are $0$ except the $i^{th}$ component, which
is equal to $\sqrt{n_0}$, while the last $n - n_0$ components are equal to $1$,
we get a solution of the multipolar robust counterpart with $u = \sqrt{n_0} + n
- n_0$.   Similarly to the previous case, when $n_0$ is equal to $1$,  we get a
static solution, while the optimal fully adjustable solution is obtained when $n_0 = n$.
%
\section{The construction of pole-sets}\label{sec:poles}

We know from Section \ref{sec:analysis} that the multipolar robust value converges
to a fully adjustable robust value when the distance between $\Omega$ and
$\Xi_P$ gets close to $0$, and $\vP = \vI$. We also proved the
monotonicity of multipolar robust value w.r.t. the inclusion of $\conv
\Omega$. Therefore, the objective of this section is to find a pole-set $\Omega
\in \mathcal{F}_P$ as close to $\Xi_P$ as possible, while minimizing the number
of poles. This is clearly related to the theory of approximation of convex sets
by polytopes. 

A considerable amount of work has been done in this area. A recent survey of
relevant results is given in \cite{BRON}. It is proved in \cite{BRONS,DUD} that
given a convex body $\Xi_P \in \rit^{n_0}$, there exists a polytope $ F_n \in
\rit^{n_0}$ having $n$ vertices containing $\Xi_P$ such that $d_H(
\Xi_P ,  F_n) \leq \frac{k\left(\Xi_P\right)}{n^{2/(n_0-1)}}$ where $d_H$
denotes the Hausdorff distance and $k(\Xi_P)$ is a constant only depending on
$\Xi_P$. More precise approximations are obtained in dimension $2$, where we
can ensure the existence of $F_n \subset\rit^2$ such that $d_H( \Xi_P ,
F_n) \leq \frac{l}{2 n} \sin{\frac{\pi}{n}}$ where $l$ is the length of the
boundary of $\Xi_P$. Moreover, if the boundary of $\Xi_P$ is two-times smooth,
then an explicit asymptotic result is known about the distance between $\Xi_P$
and the set of circumscribed polytopes having $n$ vertices: the closest
 polytope
$F_n$ satisfies $d_H( \Xi_P ,   F_n) \sim
\frac{k\left(\Xi_P\right)}{n^{2/(n_0-1)}}$ where $k(\Xi_P)$ is a constant
depending on $n_0$ and the Gaussian curvature of the boundary of $\Xi_P$
\cite{BOR}.  

The monotonicity of multipolar robust values w.r.t. pole-sets might suggest
using minimum volume circumscribed polytopes. Considering the Nikodym
distance (related to volumes) instead of the Hausdorff distance, the same kind
of results can be obtained \cite{BRON}. One might be interested in a minimum
volume simplex containing a convex set $\Xi_P$. We know for example that
if $\Xi_P$ is the hypercube $H_{n_0}$, then a minimum volume circumscribed
simplex has a volume equal to $\frac{{n_0}^{n_0} }{{n_0}!}$ \cite{LASSA}.
If $\Xi_P$ is the unit ball, then a minimum volume simplex containing
the ball is a regular simplex whose volume is
$\frac{n_0^{n_0/2}(n_0+1)^{(n_0+1)/2}}{n_0!}$ \cite{BRON} and whose
dihedral angle is $\arccos(\frac{1}{n_0})$
\cite{parks2002}. 
It is also known that a minimum volume simplex enclosing $\Xi_P$ satisfies the
centroid property: the centroid of each facet of this simplex should be in
$\Xi_P$ \cite{KLEE}. A polynomial-time algorithm to find such a
minimum volume simplex enclosing a set of points in $\mathbb R^3$ is given
in \cite {ZHOU}. However, it is generally unknown how to solve the
problem in higher dimensions \cite{GRIT}.

As observed by \cite{BRON}, most constructive algorithms were  generally
proposed for low dimensional cases ($2$ or $3$). For more general cases,
constructive algorithms of circumscribed polytopes such as the algorithm of
\cite{KAM} are generally based on the addition of inequalities without
controlling the number of vertices of the circumscribed polytope. This can
hardly accommodate the need of multipolar framework since we want to control the
complexity of MRC by limiting the number of poles.  

Note also that we are required to construct the pole-set  of $\Xi_P$ in a
reasonable time. Algorithms checking whether each extreme point of $\Xi_P$
belongs to the convex hull of $\Omega$ fail to work, since the number of
extreme points of a polytope can be exponential or even infinite.

The rest of this section is organized as follows. First, we describe a general
algorithm to construct a simplex enclosing $\Xi_P$. The resulting simplex is
guaranteed to be smallest in the sense that it cannot be shrinked. 
Then, a project-and-cut based tightening procedure is proposed to construct
pole-sets that are closer to $\Xi_P$.  
 
\subsection{Generation of a circumscribed simplex}
\label{sec:genercir}
In this section, we describe a general algorithm for the construction of a
circumscribed simplex of $\Xi_P$. Specifically, we first randomly generate a
set of $(n_0+1)$ affinely independent points, whose convex hull
forms a simplex $S$. Then, we compute the best homothetic
transformation of $S$ such that the resulting simplex contains $\Xi_P$. 

We denote by the $\omega^{(i)}, i =1,\dots, (n_0 +1)$ the $(n_0+1)$ affinely
independent points. Then the $n_0$-simplex set can be expressed as $\left\{\vx: ~
\vD\vlambda = (\vx , 1), \vlambda \geq \allzeros\right\}$, where 
\begin{align*}
    \vD=\begin{pmatrix} \omega^{(1)}_1 &\hdots & \omega^{(n_0+1)}_1 \\ \vdots &\vdots &\vdots
\\ \omega^{(1)}_{n_0} &\hdots &\omega^{(n_0+1)}_{n_0} \\ 1 &\hdots& 1 \end{pmatrix}. 
\end{align*}
Since the $(n_0+1)$ points are affinely independent, matrix $\vD$ is
invertible; therefore, $\lambda_j,~j = 1,\dots,(n_0+1)$, can be expressed as a
affine function of  $\vx$; the coefficients of the
affine function $\lambda_i$ are the components of the $i$th row of $\vD^{-1}$, i.e.,
\begin{align}\label{eq:lambda}
    \lambda_i(x)= \sum\limits_{j=1}^{n_0}l_{ij}x_j + l_{i(n_0+1)}, ~ i = 1,\dots,
    (n_0+1).
\end{align}
Note that $\lambda_i(x)\geq 0, ~ i = 1,\dots, (n_0+1)$ iff $\vx$ belongs to the $n$-simplex.

Let  $\vSigma_{\sigma}, \vT_\vt$ be the associate matrices for the operations of scaling with
factor $\sigma > 0$ and translation $\vt \in \rit^{n_0}$. Thus the associated
matrix $\vD_{\sigma,t}$ of the simplex with homothetic transformation on the simplex $S$ is 
$\vD_{\sigma,t} = \vT_{\vt} \vSigma_{\sigma} \vD$,
where
\begin{align*}
\vT_\vt = \begin{pmatrix} \vI_{n_0} &\vt \\ 0 &1 \end{pmatrix}, ~
\vSigma_{\sigma}  = \begin{pmatrix} \sigma \vI_{n_0}  & 0\\ 0& 1\end{pmatrix}.
\end{align*}
Its corresponding inverse is then
\begin{align*}
\vD_{\sigma,\vt}^{-1} & = \vD^{-1} 
\begin{pmatrix} \frac{1}{\sigma}I_{n_0}  & 0\\ 0& 1\end{pmatrix}\begin{pmatrix}
\vI_{n_0} &-\vt \\ 0 &1 \end{pmatrix}\\
                                       &= \begin{pmatrix}
    \frac{1}{\sigma} l_{11} &\hdots    & \frac{1}{\sigma}l_{1 n_0}   &l_{1(n_0+1)} - \frac{1}{\sigma}\sum\limits_{k=1}^{n_0}
t_k l_{1k} \\ \vdots &\vdots &\vdots  & \vdots \\ \frac{1}{\sigma} l_{n_0 1} &\hdots & \frac{1}{\sigma}l_{n_0 n_0}
                     &l_{n_0 (n_0+1)} -\frac{1}{\sigma}\sum\limits_{k=1}^{n_0} t_k l_{n_0 k} \\
 \frac{1}{\sigma}l_{(n_0+1)1} &\hdots  & \frac{1}{\sigma} l_{(n_0+1) n_0} & l_{(n_0+1)(n_0+1)} -\frac{1}{\sigma}\sum\limits_{k=1}^{n_0} t_k l_{(n_0+1)k} \end{pmatrix}. 
\end{align*}
Let $\sigma^*$ be the smallest scaling factor $\sigma$ such that a translate of
$\sigma S$ contains $\Xi_P$. The translate used when $\sigma = \sigma^*$ is
denoted by $\vt^*$.
\begin{theorem}\label{thm:general_translate}
$\sigma^*$ and  $\vt^*$ are given by: 
   $\vt^* = \sum\limits_{i=1}^{n_0+1} z_i\omega^{(i)}$ and 
    $\sigma^* =  -\sum\limits_{i=1}^{n_0+1} z_i$,
where for each $i=1,\ldots,(n_0+1)$,
$z_i = \min\{\sum\limits_{j=1}^{n_0} l_{ij}x_j: ~\vx\in \Xi_{P}\}.$
\end{theorem}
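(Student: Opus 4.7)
The plan is to translate the containment condition $\Xi_P \subseteq \sigma S + \vt$ into a system of linear inequalities indexed by the $(n_0+1)$ barycentric coordinates, and then prove the theorem by taking a suitable linear combination of those inequalities.

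First I would observe that, using the expression of $\vD_{\sigma,\vt}^{-1}$ already derived in the excerpt, the barycentric coordinate of a point $\vx$ with respect to the transformed simplex $\sigma S + \vt$ is
\begin{equation*}
\lambda_i^{\sigma,\vt}(\vx) \;=\; \tfrac{1}{\sigma}\sum_{j=1}^{n_0} l_{ij}\,(x_j - t_j) + l_{i(n_0+1)}.
\end{equation*}
Since $\sigma>0$, the containment $\Xi_P\subseteq \sigma S+\vt$ is therefore equivalent to the system
\begin{equation*}
\sum_{j=1}^{n_0} l_{ij} x_j \;-\; \sum_{j=1}^{n_0} l_{ij} t_j \;+\; \sigma\, l_{i(n_0+1)} \;\ge\; 0,\qquad \forall\,\vx\in\Xi_P,\ i=1,\dots,n_0+1,
\end{equation*}
which, by definition of $z_i$, collapses to the $(n_0+1)$ linear inequalities
\begin{equation*}
z_i \;+\; \sigma\, l_{i(n_0+1)} \;\ge\; \sum_{j=1}^{n_0} l_{ij}\, t_j,\qquad i=1,\dots,n_0+1. \tag{$\star$}
\end{equation*}

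Next I would exploit the identities that follow from $\sum_{i} \lambda_i(\vx) \equiv 1$ for every $\vx$, namely $\sum_{i=1}^{n_0+1} l_{ij}=0$ for $j=1,\dots,n_0$ and $\sum_{i=1}^{n_0+1} l_{i(n_0+1)}=1$. Summing the inequalities $(\star)$ over $i$ makes the $t_j$-terms cancel and yields $\sigma \ge -\sum_i z_i$, giving the lower bound $\sigma^* \ge -\sum_{i=1}^{n_0+1} z_i$.

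To show this bound is tight and that $\vt^{*}=\sum_{k=1}^{n_0+1} z_k \omega^{(k)}$ realizes it, I would plug the candidate pair $(\sigma^{*},\vt^{*})$ into $(\star)$. Using the fact that $\lambda_i(\omega^{(k)})=\delta_{ik}$, which by the formula \eqref{eq:lambda} translates into $\sum_{j=1}^{n_0} l_{ij}\,\omega^{(k)}_j = \delta_{ik}-l_{i(n_0+1)}$, one obtains
\begin{equation*}
\sum_{j=1}^{n_0} l_{ij}\, t^{*}_j \;=\; \sum_{k=1}^{n_0+1} z_k\bigl(\delta_{ik}-l_{i(n_0+1)}\bigr) \;=\; z_i + \sigma^{*}\,l_{i(n_0+1)},
\end{equation*}
so every inequality in $(\star)$ is satisfied with equality. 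Thus $(\sigma^{*},\vt^{*})$ is feasible and attains the lower bound, which completes the argument.

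The only mildly delicate step is bookkeeping the two identities $\sum_i l_{ij}=0$ and $\sum_i l_{i(n_0+1)}=1$; once these are in hand, both the lower bound (by summing $(\star)$) and the feasibility of the stated $\vt^{*}$ (by the $\delta_{ik}$-identity) drop out almost immediately, so I expect no substantive obstacle beyond clean notation.
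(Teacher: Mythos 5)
Your proposal is correct and follows essentially the same route as the paper: both derive the system of inequalities from nonnegativity of the barycentric coordinates over $\Xi_P$, sum them using the row-sum identities of $\vD^{-1}$ to obtain $\sigma \ge -\sum_i z_i$, and then exhibit $\vt^*$ attaining equality. The only cosmetic difference is in the last step, where you verify feasibility of $\vt^*$ by direct substitution via $\lambda_i(\omega^{(k)})=\delta_{ik}$, whereas the paper solves the linear system $\sum_k t_k l_{ik} = \sigma l_{i(n_0+1)} - l'_{i(n_0+1)}$ by multiplying through by $\vD$; the two computations are equivalent. One small point you omit and the paper includes: checking that $-\sum_i z_i \ge 0$ (which follows by evaluating $\sum_i\sum_j l_{ij}x_j = 0$ at any $x\in\Xi_P$), together with the degenerate case $\sigma^*=0$ where $\Xi_P$ is a single point and $\vD_{\sigma,\vt}^{-1}$ does not exist; without the positivity check your tightness argument would be vacuous, so it is worth adding the one-line verification.
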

\begin{proof}
Assume that the homothetic copy of $S$ given by  $\sigma S + \vt$  contains
$\Xi_P$. Then the coefficients $\lambda_i(x)$ defined in  \eqref{eq:lambda}
should be nonnegative for any point $\vx \in \Xi_P$. 
Considering the matrix $\vD_{\sigma,t}^{-1}$ defined above and computing the
minimum values of $\lambda_i(x), i=1,\ldots,(n_0+1)$,  
we get 
\begin{align*}
l_{i(n_0+1)}-\frac{1}{\sigma}\sum\limits_{k=1}^{n_0} t_k l_{ik}+  \frac{1}{\sigma}
z_i \ge 0, ~ i= 1, \dots, (n_0+1), 
\end{align*}
For ease of notation, we express this as
  $  l_{i(n_0+1)}' + z_i \geq 0, ~ i= 1, \dots, (n_0+1), $
where $l_{i(n_0+1)}' = \sigma l_{i(n_0+1)}- \sum\limits_{k=1}^{n_0} t_k l_{ik} , ~ i= 1,
\dots, (n_0+1)$. \\
Since the matrix $(l)_{i, j =1,\dots,(n_0+1)}$  is the inverse of matrix $\vD$, we
have
\begin{equation}
 \label{eq:inverse}
\sum\limits_{i=1}^{n_0+1} l_{ij} = 0, ~j= 1, \dots, n_0~\mbox{ and } 
    \sum\limits_{i=1}^{n_0+1} l_{i(n_0+1)} = 1. 
\end{equation}

Summing all $l'_{i (n_0+1)}$, we get
\begin{equation}
\label{eq:sigma}
\sigma=\sum\limits_{i=1}^{n_0+1} l'_{i (n_0+1)} \ge -\sum\limits_{i=1}^{n_0+1} z_i.\end{equation}

Observe that having $l'_{i (n_0+1)}$ (and $\sigma^*$ as a consequence), we can
get the translate $t$ through the linear system  $ \sum\limits_{k=1}^{n_0} t_k l_{ik} = \sigma
l_{i(n_0+1)}-l_{i(n_0+1)}' , ~ i= 1, \dots, (n_0+1)$. Multiplying by $\vD$, we get that
$(t_1,...,t_{n_0}, 0) = \sigma (0,...,0,1) - \vD (l'_{1 (n_0+1)},..., l'_{(n_0+1)  (n_0+1)})$
which leads to $\vt = \sum\limits_{i=1}^{n_0+1} z_i\omega^{(i)}$.

According to \eqref{eq:sigma}, the smallest $\sigma^*$ is $-\sum\limits_{i=1}^{n_0+1} z_i$.
We should however check if $\sigma^* \geq 0$. This holds since by considering any $x \in \Xi_P$, one can write  that  
\begin{align*}
\sum\limits_{i=1}^{n_0+1} z_i  \leq &  \sum\limits_{i=1}^{n_0+1} \sum_{j=1}^{n_0} l_{ij} x_j \\
                                               = & \sum_{j=1}^{n_0} x_j  \sum\limits_{i=1}^{n_0+1} l_{ij} \\
                                               = & 0, 
\end{align*}
where the last equality is based on \eqref{eq:inverse}.  

Since the matrix $\vD_{\sigma,t}^{-1}$ does not exist when $\sigma^* = 0$, we
have to study this special case. It is clear that $\sigma^* = 0$   if and only
if $\Xi_P$ is a single point. Observe that in this case, we necessarily have
    $ \sum\limits_{i=1}^{n_0+1} z_i = 0$ since $ z_i = \sum_{j=1}^{n_0} l_{ij}
    x_j$, where $\vx$ is the single point of $\Xi_P$. Then formula 
$\sigma^* = - \sum\limits_{i=1}^{n_0+1} z_i$ is still valid and $\vt^* =  \vx =
\sum\limits_{i=1}^{n_0+1} z_i\omega^{(i)}$  also occurs.   
\end{proof}

Note that values of $z_i, i=1,\dots, (n_0+1)$ defined in
Theorem~\ref{thm:general_translate} can easily be computed for any $\Xi_P$
since we only have to minimize a linear function over a convex set.  

As a special case, pole-sets of a hypercube are of great use in multipolar
robust approach. First, hypercubes are one of the most common uncertainty sets
in many applications. Second, general box sets of the form
$\left\{\vx: \vx\in \left[\vl,\vu\right]^{n_0} \subseteq \rit^{n_0}\right\}$ are simply affine
transformations of a hypercube, so the pole-sets of a hypercube also apply to
boxes with some simple transformations.    
\begin{corollary}\label{cor:hypercube}
If $\Xi_P$ is a hypercube, then $\sigma^*$ and  $\vt^*$ are given by: 
$$
     \vt^* =  \sum\limits_{i=1}^{n+1} \sum\limits_{j=1}^{n_0}
      \min\{0, l_{ij}\}\omega^{(i)} \mbox{ and }
    \sigma^* =  \frac{1}{2}\sum\limits_{i=1}^{n_0+1} \sum\limits_{j=1}^{n_0} \abs{l_{ij}}. 
$$
\end{corollary}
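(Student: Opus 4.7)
The plan is to apply Theorem~\ref{thm:general_translate} directly, taking $\Xi_P$ to be the unit hypercube $[0,1]^{n_0}$, and then simplifying the resulting expressions using the row-sum identities \eqref{eq:inverse} of $\vD^{-1}$.

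First, I would compute $z_i = \min\{\sum_{j=1}^{n_0} l_{ij} x_j: \vx \in [0,1]^{n_0}\}$ for each $i$. Since the objective is linear and the constraint set is a box, the minimum separates coordinate-wise: set $x_j = 1$ when $l_{ij} < 0$ and $x_j = 0$ otherwise. This yields $z_i = \sum_{j=1}^{n_0} \min\{0, l_{ij}\}$. Substituting into $\vt^* = \sum_{i=1}^{n_0+1} z_i \omega^{(i)}$ from Theorem~\ref{thm:general_translate} gives the stated formula for $\vt^*$ immediately.

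Next, for $\sigma^*$, I would use $\sigma^* = -\sum_{i=1}^{n_0+1} z_i = \sum_{i=1}^{n_0+1}\sum_{j=1}^{n_0} \max\{0,-l_{ij}\}$. Rewriting $\max\{0,-l_{ij}\} = \tfrac{1}{2}(|l_{ij}| - l_{ij})$ gives
\[
\sigma^* \;=\; \tfrac{1}{2}\sum_{i=1}^{n_0+1}\sum_{j=1}^{n_0} |l_{ij}| \;-\; \tfrac{1}{2}\sum_{j=1}^{n_0}\Bigl(\sum_{i=1}^{n_0+1} l_{ij}\Bigr).
\]
The second sum vanishes by \eqref{eq:inverse}, since $\sum_{i=1}^{n_0+1} l_{ij} = 0$ for each $j = 1, \ldots, n_0$, leaving exactly $\sigma^* = \tfrac{1}{2}\sum_{i,j}|l_{ij}|$, as claimed.

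There is no real obstacle here; the result is essentially a direct specialization. The only point worth double-checking is the sign/normalization convention for the hypercube (the factor $\tfrac{1}{2}$ in $\sigma^*$ pins down $\Xi_P = [0,1]^{n_0}$ rather than, say, $[-1,1]^{n_0}$), and the cancellation that produces it relies crucially on the row-sum identity from \eqref{eq:inverse}.
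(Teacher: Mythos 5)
Your proof is correct and follows essentially the same route as the paper: specialize Theorem~\ref{thm:general_translate} to the box $[0,1]^{n_0}$ to get $z_i = \sum_{j=1}^{n_0}\min\{0,l_{ij}\}$, then use the column-sum identity $\sum_{i=1}^{n_0+1} l_{ij}=0$ from \eqref{eq:inverse} to turn $-\sum_{i,j}\min\{0,l_{ij}\}$ into $\tfrac{1}{2}\sum_{i,j}|l_{ij}|$. The paper's proof is just a terser version of the same computation, and your remark that the factor $\tfrac{1}{2}$ reflects the $[0,1]^{n_0}$ normalization is consistent with the paper's convention.
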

\begin{proof}
If $\Xi_P$ is a hypercube, by Theorem~\ref{thm:general_translate}, we have $
z_i = \sum\limits_{j=1}^{n_0} \min\{0, l_{ij}\}, ~i=1,\dots,n_0$.
By \eqref{eq:inverse}, we have
$
-\sum\limits_{i=1}^{n_0+1} \sum\limits_{j=1}^{n_0}
\min\{0,l_{ij}\} = \frac{1}{2}\sum\limits_{i=1}^{n_0+1} \sum\limits_{j=1}^{n_0}
\abs{l_{ij}}$,
which completes the proof.
\end{proof}
According to Corollary~\ref{cor:hypercube}, we have a closed formula for the
homothetic translation for a  $n_0$-simplex $S$  containing the $n_0$-hypercube, i.e,
\begin{align}
    \vx_{\sigma, t} = \frac{1}{2}
    \sum\limits_{i=1}^{n_0+1}\sum\limits_{j=1}^{n_0}\abs{l_{ij}} \vx +
    \sum\limits_{i=1}^{n_0+1} \omega^{(i)}\sum\limits_{j=1}^{n_0} \min\left\{0,
    l_{ij}\right\} ,   ~\forall \vx\in S.
\end{align}
Note that the value $\sigma^*$ presented in Corollary~\ref{cor:hypercube} has
been  given in \cite{Nevskii2011} but the proof here is much simpler.  

To sum up the foregoing, we present a general algorithm for the generation of a circumscribed simplex as follows.
\begin{enumerate}
\item Generate $(n_0+1)$ affinely independent points $(\omega^{(i)})_{i=1}^{i=n_0+1}$.
\item Compute $\sigma^*$ and $\vt^*$  by Theorem ~\ref{thm:general_translate} and output the
$\sigma^*\omega^{(i)} + \vt^*, i=1,\dots, (n_0+1)$.
\end{enumerate}

\subsection{A tightening procedure} \label{sec:tight}
In this section, we propose a general procedure to construct pole-sets of good
quality by tightening a given pole-set.

The procedure is the following: among the vertices of $\Omega$ select the farthest
one in $L^2$ sense from $\Xi_P$ and compute the projection of this vertex on
$\Xi_P$.  Then we consider the hyperplane separating this vertex from $\Xi_P$
(containing the projection) and compute the extreme points of the intersection of this
hyperplane with $\conv \Omega$. These extreme points are added to $\Omega$ while the
vertex that has been projected is removed from $\Omega$. 
Figure \ref{fig:tightening} illustrates a tightening procedure of a 3-D simplex
covering $H_3$. The procedure is repeated until the cardinality of $\Omega$
reaches some fixed upper bound. 
\begin{figure}[t]
\tdplotsetmaincoords{70}{120}
\begin{tikzpicture}[tdplot_main_coords,scale=1.5]
\def\BigSide{1}
\def\SmallSide{1}
\pgfmathsetmacro{\CalcSide}{\BigSide-\SmallSide}
\tdplotsetcoord{P}{sqrt(3)*\BigSide}{55}{45}
\draw[dashed]
    (0,0,0) -- (Px)
    (0,0,0) -- (Py)
    (0,0,0) -- (Pz);
\draw
  (Pxz) -- (P) -- (Pxy) -- (Px) -- (Pxz) -- (Pz) -- (Pyz) -- (P); 
\draw
 (Pyz) -- (Py) -- (Pxy);
\draw[very thick] (1.7535,-0.3684,0)--(0.9141,0.7368,3.0)--(0.3324,2.6316,0)--cycle;
\draw[very thick,dotted] (-1.2465,-0.3684,0)-- (1.7535,-0.3684,0);
\draw[very thick,dotted] (-1.2465,-0.3684,0)--(0.9141,0.7368,3.0);
\draw[very thick,dotted] (-1.2465,-0.3684,0)--(0.3324,2.6316,0);
\draw[very thin,red,->](.9141,0.7368,3.0)--(.9141,0.7368,1);
\draw[fill=black] (.9141,0.7368,3.0) circle(1pt) node[left] {$\omega^0$};
\draw[fill=gray] (.9141,0.7368,1) circle(1pt);
\draw[fill=blue, opacity=0.3] (-0.5263,0,1)--(1.4737,0,1)--(0.5263,2,1)--cycle;
\draw[fill=black] (-0.5263,0,1) circle(1pt) node[left,text width=5mm]{$\omega^1$};
\draw[fill=black] (1.4737,0,1) circle(1pt) node[left] {$\omega^2$};
\draw[fill=black] (0.5263,2,1) circle(1pt) node[right] {$\omega^3$};
\end{tikzpicture}
\hspace{2cm}
\tdplotsetmaincoords{70}{115}
\begin{tikzpicture}[tdplot_main_coords,scale=1.5]
\def\BigSide{1}
\def\SmallSide{1}
\pgfmathsetmacro{\CalcSide}{\BigSide-\SmallSide}
\tdplotsetcoord{P}{sqrt(3)*\BigSide}{55}{45}
\draw[dashed]
    (0,0,0) -- (Px)
    (0,0,0) -- (Py)
    (0,0,0) -- (Pz);
\draw
  (Pxz) -- (P) -- (Pxy) -- (Px) -- (Pxz) -- (Pz) -- (Pyz) -- (P); 
\draw
 (Pyz) -- (Py) -- (Pxy);
\draw[very thick,dotted] (-1.2465,-0.3684,0)-- (1.7535,-0.3684,0);
\draw[very thick,dotted] (-1.2465,-0.3684,0)--(0.3324,2.6316,0);
\draw[very thick,dotted] (-1.2465,-0.3684,0)--(-0.5263,0,1);
\draw[very thick](-0.5263,0,1)--(1.4737,0,1)--(0.5263,2,1)--cycle;
\draw[very thick] (1.4737,0,1)--(0.5263,2,1)--(0.3324,2.6316,0)--(1.7535,-0.3684,0) --cycle;
\draw[fill=black] (-0.5263,0,1) circle(1pt) node[left,text width=5mm]{$\omega^1$};
\draw[fill=black] (1.4737,0,1) circle(1pt) node[left] {$\omega^2$};
\draw[fill=black] (0.5263,2,1) circle(1pt) node[right] {$\omega^3$};
\end{tikzpicture}
\caption{(Left) Pole $\omega^0$ is replaced with poles $(\omega^i)_{i=1}^{3}$.
(Right) The updated convex hull of the new set of poles.}
\label{fig:tightening}
\end{figure}
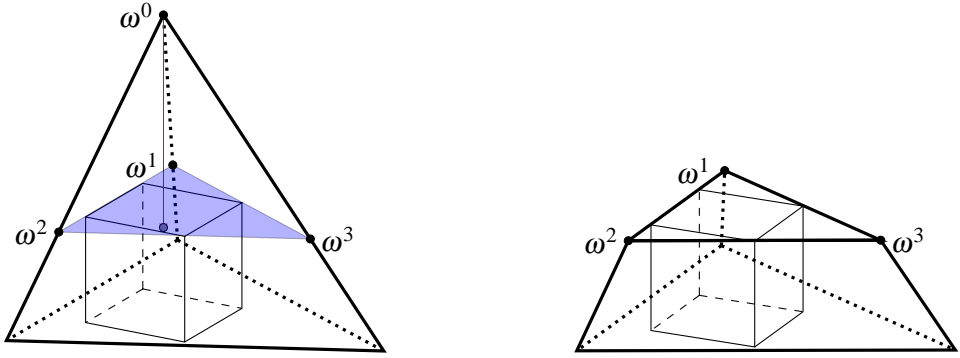
Details are given below:
\begin{enumerate}
    \item Assume $\Omega = \left\{ \omega^{(k)}, k \in I \right\}$.  For each $k \in I$,
    compute the distance between $\omega^{(k)}$  and $\Xi_P$.  Let $\vz^k$ be the
    projection of $\omega^{(k)}$ on $\Xi_P$. $\vz^k$ can be usually expressed in a
    closed form.  For example, in the ball case, we have
    $\vz^k=\frac{\omega^{(k)}}{\norm{\omega^{(k)}}{2}}$, while in the hypercube
    case we get $\vz^k_i =   \omega^{(k)}_i$ if $\omega^{(k)}_i\in [0,1]$, $z^k_i
    =  1$ if $\omega^{(k)}_i \geq 1$ and $z^k_i =  0$ if not. 
 The distance between $\omega^{(k)}$ and $\Xi_P$ is then given by
$\norm{\omega^{(k)} - \vz^k}{2}$. Let $\omega^{(k_0)}$ be the vertex of $\Omega$
maximizing the distance from
$\Xi_P$:  $$\omega^{(k_0)} = \arg\max\limits_{\omega^{(k)} \in \Omega}
\norm{\omega^{(k)} - \vz^k}{2}.$$
\item  Let $\valpha =\omega^{(k_0)} - \vz^{k_0}$ and let
$B\left(\omega^{(k_0)},\norm{\valpha}{2}\right)$  be the ball of radius $\norm{\valpha}{2}$
centered at $\omega^{(k_0)}$.  Since
$B\left(\omega^{(k_0)},\norm{\valpha}{2}\right) \cap
\Xi_P = \{\vz^{k_0}\}$ and $B\left(\omega^{(k_0)},\norm{\valpha}{2}\right)$ and $\Xi_P$ are convex,
there is a hyperplane separating them. This hyperplane, denoted by
$h(\omega^{(k_0)})$, is here uniquely defined since it contains $\vz^{k_0}$ and
is orthogonal to $\valpha$. It is then given by  $ h\left(\omega^{(k_0)}\right) = \left\{ \vx:
(\vx - \vz^{k_0})^T \valpha  = 0\right\}$. We use
$h^-\left(\omega^{(k_0)}\right)=\left\{ \vx: (\vx -
\vz^{k_0})^T\valpha  <  0\right\}$  and $h^+\left(\omega^{(k_0)}\right)=\left\{
    \vx:  (\vx - \vz^{k_0})^T \valpha \geq 0 \right \}$ to respectively denote
    the inner and outer half spaces. 

  \item Now partition the vertices $\left(\omega^{(k)}\right)_{k \in I}$ into
  two disjoint sets: $\Omega^-$ and $\Omega^+$, where $\Omega^-=\left\{\omega^{(k)}: \omega^{(k)} \in
      h^-\left(\omega^{(k_0)} \right)\right\} $
      and $\Omega^+ = \left\{\omega^{(k)}: \omega^{(k)} \in
    h^+\left(\omega^{(k_0)}\right)\right\}$.  
Then consider the set of  vertices $ \Omega'$  obtained as intersections
between the hyperplane $h(\omega^{(k_0)})$ and the set of lines
$(\omega^{(i)},\omega^{(j)})$ where  $\omega^{(i)}  \in \Omega^+$ and
$\omega^{(j)} \in  \Omega^-$:  $ \Omega'= \bigcup_{\omega^{(i)}  \in \Omega^+,
\omega^{(j)} \in  \Omega^-} h(\omega^{(k_0)}) \cap
    (\omega^{(i)},\omega^{(j)})$. 
The number of such intersections is of course less than $|\Omega^-| \times
|\Omega^+|$. Also note that we need to remove redundant points from  $\Omega'$
if they are convex combinations of other points of $\Omega'$. Finally update  $\Omega$ by
deleting $\Omega^+$ and adding $\Omega'$: $\Omega = \Omega^- \cup \Omega'$.  If
cardinality of $\Omega$ is still under a prescribed upper bound, the procedure
is repeated.  
\end{enumerate}
To conclude this section, we might add that it is sometimes more efficient to
start with a pole-set having more than $(n_0 + 1)$ poles. Assume, for example,
that $\Xi_P$ is the unit ball  $\left\{\vx\in \rit^{n_0}: \norm{\vx}{2} \le 1
\right\}$. Then one can consider a \emph{2n-pole-set} where poles are the $2
n_0$ extreme points of $ \left\{\vx\in \rit^{n_0}: \norm{\vx}{1} \le
\sqrt{n_0}\right\}$. Of course, 2n-pole-sets can also be easily generated for
many other convex sets. 

\section{A numerical example: the lobbying problem}\label{sec:problem}

Let us consider a lobbying problem where a set of voters (for example,
legislators  or members of regulatory agencies) have to take some decisions.
The opinion of each voter depends on the opinion of some authorities.
Authority's opinions are generally uncertain. A lobby would like to ensure
that an important decision will be unanimously approved by all voters. The
lobby will spend some effort (energy, money, etc.) to convince each voter,
while the total lobbying budget is minimized. Assume that there are $m$ voters
and $n$ authorities. The opinion of voter $i$ is given by $\sum_{j=1}^{n}
Q_{ij} \xi_j$ where $Q_{ij}$ is an estimated number belonging to $[-1,1]$ and
$\xi_j$ represents the uncertain opinion of  authority $j$.  If $Q_{ij}$ is
close to $1$, then $j$ has a big impact on $i$, while $Q_{ij} = 0$ means that
$i$ does not care about $j$, while a negative value of $Q_{ij}$ can be
interpreted as a negative effect (i.e., when $j$ recommends something, $i$ is
inclined to have an opposite opinion). We assume here that the lobby would
be satisfied if $\sum_{j=1}^{n} Q_{ij} \xi_j \leq 0$ for each voter. Since
this might not occur for some voters, some effort modeled here by $v_{i}(\xi)$
can be made by the lobby to convince them. The total effort is quantified by
$\sum_{i=1}^m  r_i  v_{i}(\xi)$ where $r_i$ is a unit effort price
corresponding with voter $i$. The problem can be formulated as follows,
\begin{equation}\label{eq:problem}
\begin{split}
    \min\hspace{1em}& u         \\
    \textrm{s.t.}\hspace{1em} & \sum_{i=1}^{m}  r_i v_{i}(\xi)  \le u, ~\xi \in \Xi, \\
    & \vQ\xi \leq \vv(\xi), ~\xi\in \Xi,\\
    & \vv(\xi)  \ge \allzeros, ~\xi \in \Xi,
\end{split}
\end{equation}
where $\vQ\in [-\allones,\allones]^{m\times n}$, $\Xi$ is the convex
uncertainty set and $u$ is the budget that has to be secured by the lobby.
The lobby problem is related to opinion dynamics in social networks (see
\cite{ACE} and the references therein). Notice that interactions between voters
are also possible since the set of authorities might include the set of voters
as a subset. 

To illustrate the multipolar robust approach, we consider here two different
uncertainty sets: the hypercube $H_n$ and a unit volume ball $B_n$. The numbers
$r_i$ are assumed to be equal to $1$. Specializing \eqref{dual:polytope} to
$H_n$, we get the following formulation for MRC associated with a shadow matrix
$\vP$ and a feasible pole-set $\Omega\in \mathcal{F}_{P{H_n}}$,
\begin{equation}\label{eq:prob_hypercube}
    \begin{split}
\Pi_{H_n}(\vP,\Omega)=\min\limits_{u,\vv_\omega}\hspace{1em}& u\\
    \textrm{s.t.}\hspace{1em} & \sum\limits_{i=1}^{m} \vv_\omega^i + \sum\limits_{j=1}^n\sigma_j\omega_j +
        \sum\limits_{j=1}^n \beta_j \le u, ~  \omega \in \Omega,\\
        &  \sum\limits_{j=1}^{n}\eta^T_{ij}\omega_{j} +
        \sum\limits_{j=1}^{n}\alpha_{ij}- v_{\omega}^{i}\leq 0, ~\omega \in \Omega, i=1,\dots, m, \\
        & \sum\limits_{j=1}^{n}\mu_{ij} - \vV_{\omega}^i -
        \sum\limits_{j=1}^n\delta_{ij}\omega_j \leq
        0,  ~\omega \in \Omega, i=1,\dots, m, \\
        & \vbeta + \vP^T\vsigma\ge \allzeros,  \\ 
        & \alpha_{ij} + \vP^T\veta_{i}\ge Q_{ij}, ~ i= 1,\dots, m, j=1,\dots, n, \\
        & \vmu_{i}-\vP^T\tau_{i} \ge \allzeros, ~i=1,\dots,m, \\
        & \alpha_{ij}, \beta_j, \mu_{ij}\ge 0, ~ i= 1,\dots, m, j=1,\dots, n,\\
        & \vsigma, \veta_i,\tau_i \in \rit^{n_0}, ~i=1,\dots,m.
    \end{split}
\end{equation}
As said above, we also consider the case where the uncertainty set is  a unit
volume ball $B_n$, whose center is
$\bar \xi = (\frac{1}{2},\hdots,\frac{1}{2},\hdots,\frac{1}{2})$ and radius
$\rho=(\frac{\Gamma(n/2+1)}{\pi^{n/2}})^{\frac{1}{n}}$.  According to
\eqref{eq:counterpartball}, the dual of the multipolar robust
counterpart w.r.t. $B_n$, writes
\begin{equation}\label{eq:prob_ball}
\begin{split}
 \Pi_{B_n}(\vP,\Omega) = \min\limits_{u,\vv_\omega}\hspace{1em}& u \\
    \textrm{s.t.}\hspace{1em}
    & \rho \norm{\vP^T\sigma}{2} +\allones^T\vv_\omega -\omega^T\vsigma \le u, ~~\omega \in \Omega, \\
    & \rho \norm{\vQ_i^T+\vP^T\veta_i}{2} -\vv_\omega^i
    -\omega^T\veta_i + \vQ_i\bar \xi\le   0, ~~\omega \in \Omega, i=1,\dots,m, \\ 
    & \rho \norm{\vP^T\vmu_i}{2}  -\vv_\omega^i -\omega^T\vmu_i \le   0,   ~\omega \in \Omega, i=1,\dots, m, \\ 
    & \vsigma , \veta_i, \vmu_i\in \rit^{n_0},  ~ i=1,\dots, m,
\end{split}
\end{equation}
where $\Omega \in \mathcal{F}_{P{B_n}}$.

As stated in Section \ref{sec:special}, the fully adjustable robust value
w.r.t. hypercube $H_n$ can be achieved by simply taking $\vP=\vI$ and
$\Omega=\ext(H_n)$. 
The problem looks more complex in the  ball case since the number of extreme
points is infinite.  Assume again that $\vP$ is identity.
Given $\xi\in B_n$, the optimal solution of $v_i$ is $\max\{0, \vQ_i\xi\}$ for
each $i=1,\dots,m$. Denote by $\mathcal{P}$ the power set of the index set
\{1,\dots,m \}.
We partition the ball $B_n$ into a family of disjoint subsets by a set valued
mapping $S: \mathcal{P}\mapsto 2^{B_n}$, i.e., for each $J\in \mathcal{P}$,
\begin{align*}
    S(J):=\left\{\xi\in B_n: \vQ_i \xi \geq 0, ~i\in J, ~\vQ_j\xi \le 0, ~j\in
    \bar J\right\},
\end{align*}
where $\bar J=\{1,\dots,m\}\setminus J$. Therefore, the fully adjustable robust program writes 
\begin{align}\label{eq:FARC}
    \Pi^*_{B_n}= \max\limits_{J \in \mathcal{P}:S(J)\neq
    \emptyset}~\max\limits_{\xi \in S(J)} \sum\limits_{i\in J}\vQ_i\xi.
\end{align}
Notice that \eqref{eq:FARC} takes an exponential number (in the number of
constraints $m$) of seconder order cone programs to obtain the fully adjustable robust value
$\Pi^*_{B_n}$. We show that \eqref{eq:FARC} is
equivalent to a much simpler problem.
\begin{lemma}\label{lemma:FARC_simple}
   Program \eqref{eq:FARC} is equivalent to
\begin{align} \label{eq:FARC_simple}
\Pi^*_{B_n} = \max\limits_{J \in \mathcal{P}}\left\{ \rho \norm{\sum\limits_{i\in
J}\vQ_i}{2}+\sum\limits_{i\in J}\vQ_i\bar \xi\right\}.
\end{align}
\end{lemma}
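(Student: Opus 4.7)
I will prove \eqref{eq:FARC_simple} by establishing both inequalities, treating the simplified formulation as the relaxation of \eqref{eq:FARC} obtained by dropping the sign constraints that define $S(J)$.

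First, for any fixed $J \in \mathcal{P}$, maximizing the linear functional $\xi \mapsto \sum_{i\in J}\vQ_i\xi$ over the ball $B_n = \{\xi : \|\xi-\bar\xi\|_2 \le \rho\}$ admits a closed form: by Cauchy--Schwarz (applied after the translation $\eta = \xi-\bar\xi$), the optimum equals $\rho\,\|\sum_{i\in J}\vQ_i\|_2 + \sum_{i\in J}\vQ_i\bar\xi$, attained at $\xi^\star_J = \bar\xi + \rho\,\sum_{i\in J}\vQ_i/\|\sum_{i\in J}\vQ_i\|_2$ when the sum is nonzero (and at any $\xi\in B_n$ otherwise). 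This identifies the RHS of \eqref{eq:FARC_simple} with $\max_{J}\max_{\xi\in B_n}\sum_{i\in J}\vQ_i\xi$.

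The inequality $\Pi^*_{B_n} \le \max_{J}\{\rho\|\sum_{i\in J}\vQ_i\|_2 + \sum_{i\in J}\vQ_i\bar\xi\}$ is then immediate: for each $J$ with $S(J)\ne\emptyset$, one has $S(J)\subseteq B_n$, so the inner maximum in \eqref{eq:FARC} is dominated by the maximum over the larger set $B_n$.

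For the converse, I plan to show that every candidate value of the simplified program can be matched (in fact, weakly improved) by a feasible choice in \eqref{eq:FARC}. Fix any $J\in\mathcal{P}$ and let $\xi^\star_J$ be its ball-maximizer from the first step. Define
\begin{equation*}
J^\star := \{\,i \in \{1,\dots,m\} : \vQ_i\xi^\star_J \ge 0\,\}.
\end{equation*}
By construction $\xi^\star_J \in S(J^\star)$, so $S(J^\star)\ne\emptyset$ and $(J^\star,\xi^\star_J)$ is feasible for \eqref{eq:FARC}. Moreover,
\begin{equation*}
\sum_{i\in J^\star}\vQ_i\xi^\star_J \;=\; \sum_{i\in J\cap J^\star}\vQ_i\xi^\star_J + \sum_{i\in J^\star\setminus J}\vQ_i\xi^\star_J \;\ge\; \sum_{i\in J\cap J^\star}\vQ_i\xi^\star_J \;\ge\; \sum_{i\in J}\vQ_i\xi^\star_J,
\end{equation*}
where the first inequality uses that the added terms are nonnegative and the second drops the strictly negative terms indexed by $J\setminus J^\star$. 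The rightmost quantity equals $\rho\|\sum_{i\in J}\vQ_i\|_2 + \sum_{i\in J}\vQ_i\bar\xi$, while the leftmost is a feasible value of \eqref{eq:FARC}; hence $\Pi^*_{B_n} \ge \rho\|\sum_{i\in J}\vQ_i\|_2+\sum_{i\in J}\vQ_i\bar\xi$. Taking the maximum over $J$ yields the reverse inequality and completes the proof.

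The only subtle point, and where I expect the mild care is needed, is the second step above: one must argue that even though $J$ need not correspond to the sign pattern of its own maximizer $\xi^\star_J$, the "repaired" index set $J^\star$ gives a feasible pair in \eqref{eq:FARC} that weakly dominates the simplified value attained by $J$. Everything else is routine maximization of a linear function over a Euclidean ball.
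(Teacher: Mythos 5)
Your proof is correct and follows essentially the same route as the paper's: the upper bound comes from relaxing $\xi\in S(J)$ to $\xi\in B_n$ and maximizing the linear functional over the ball in closed form, and the lower bound comes from taking the ball-maximizer $\xi^\star_J$ and replacing $J$ by its sign-pattern set $J^\star$ (the paper's $K$), using exactly the same two-step term-dropping inequality. The only cosmetic difference is that you run the repair argument for every $J$ rather than only for the maximizing $J_{\max}$, and you explicitly note the degenerate case $\sum_{i\in J}\vQ_i=\mathbf{0}$, which the paper glosses over.
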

\begin{proof}
Observe first that $\max\limits_{J \in \mathcal{P}}\left\{ \rho
\norm{\sum\limits_{i\in J}\vQ_i}{2}+\sum\limits_{i\in J}\vQ_i\bar \xi\right\}$ is
an upper bound of  $\Pi^*_{B_n}$ since  it is obtained by relaxing the
constraints $\xi \in S(J)$. 

We show that it is also a lower bound of $\Pi^*_{B_n}$. Let  $J_{max}\in
\mathcal{P}$ be a subset for which the maximum is achieved:
$$J_{max} = \arg\max\limits_{J \in \mathcal{P}}\left\{ \rho
\norm{\sum\limits_{i\in J}\vQ_i}{2}+\sum\limits_{i\in J}\vQ_i\bar \xi\right\}.$$
Take $\hat\xi  \in B_n$ such that 
$\hat \xi=  \bar \xi+\frac{\rho \sum\limits_{i\in
J_{max}}\vQ_i}{\norm{\sum\limits_{i\in J_{max}}\vQ_i}{2} }$.
Let $K\in \mathcal{P}$ such that
\begin{align*}
\vQ_i \hat \xi \ge 0, &~ i\in K,\\ 
\vQ_i \hat \xi \le 0, &~ i\in \{1,\dots,m\}\setminus K.
\end{align*}
We have that 
\begin{align*}
 \rho \norm{\sum\limits_{i \in J_{max}} \vQ_i}{2}+\sum\limits_{i\in J_{max}}\vQ_i\bar \xi
 &= \sum\limits_{i\in J_{max}} \vQ_i\hat\xi \\
 &\le \sum\limits_{i \in K} \vQ_i\hat \xi \\
 &\le \max_{\xi\in S(K)}\sum\limits_{i\in K} \vQ_i\xi  \\
 &\le \Pi^*_{B_n}
\end{align*}
where the first equality follows from the choice of $\hat \xi$. The second
inequality is due to $J_{max}\cap K \subseteq K$,
while the third inequality is from  the fact that $\hat\xi$ belongs to $S(K)$ by
    the definition of $K$. 
 The last inequality is a direct consequence of \eqref{eq:FARC}. 
\end{proof}
Although problem \eqref{eq:FARC_simple} is easier than problem \eqref{eq:FARC},
it is still computationally costly when the number of constraints $m$ is large.
Notice that \eqref{eq:FARC_simple} can also be seen as a integer quadratic
program  that can be approximated by  semidefinite programming  and solved
using  standard quadratic programming tools.  We will not elaborate more on
this since this falls out of the scope of the paper. 
\subsection{Numerical experiments}\label{sec:numerical}
 The problem instances are randomly generated following the rules below.
\begin{enumerate}
    \item Let $m\in\{10,20,30, 40, 50\}$, $n\in\{5, 9,10,12,15,20,30\}$.
\item Generate the components of $\vQ$ uniformly over $[-\allones,\allones]$. 
    \item We build four different sizes of pole-sets for each considered
    hypercube by the circumscribed simplex generation algorithm and the
    tightening procedure described in Section~\ref{sec:poles}. As a result, for
    a hypercube $H_n$, $(\Omega_i)$ is a monotonic sequence w.r.t. the set
    inclusion of their convex hulls, i.e., $\Omega_i \preceq_{H_n} \Omega_j$
    for all $i>j$. Table \ref{tab:poles} displays the cardinality of different
        pole-sets of $H_n$. The number of vertices of $H_n$ is also provided in
        the last column.  
    \begin{table}[htb]
    \begin{center}
    \caption{The pole-sets of hypercubes}   
    \label{tab:poles}
\smallskip
    \begin{tabularx}{\textwidth}{@{}lp{4em}l*4{>{\raggedleft\arraybackslash}X}@{}}
\toprule\noalign{\smallskip}
    Hypercube        & $\abs{\Omega_0}$ & $\abs{\Omega_1}$ & $\abs{\Omega_2}$ &
    $\abs{\Omega_3}$ & \#ext.\\
\noalign{\smallskip} \midrule\noalign{\smallskip}
     $H_9 $  & 10 & 32 & 162 & 387 & 512\\
\noalign{\smallskip}
    $H_{10}$ & 11 & 36 & 112 & 322 & 1,024\\
\noalign{\smallskip}
    $H_{12}$ & 13 & 44 & 144 & 449 & 4,046\\
\noalign{\smallskip}
    $H_{15}$ & 16 & 56 & 192 & 353 & 32,768\\
\noalign{\smallskip}
    $H_{20}$ & 21 & 76 & 144 & 514 & 1,048,576\\
\noalign{\smallskip}
    $H_{30}$ & 31 & 60 & 116 & 432 & 1,073,741,824\\
\noalign{\smallskip} \bottomrule
    \end{tabularx}
    \end{center}
\end{table}    
\item As an illustration of multipolar robust approach for smooth convex
uncertainty sets, we generate pole-sets of ball $B_n$ as well. In Table
\ref{tab:pole_ball}, $\Omega_0$ denotes a first pole-set whose convex hull is a
simplex, while $\Omega_1$ is the $2n-$pole-set defined at the end of  Section
\ref{sec:poles}. Starting from $\Omega_1$ and applying the tightening
procedure, we get the pole-sets $\Omega_i, i=2,3,4$ as outputs. The cardinality of
pole-sets are shown in Table \ref{tab:pole_ball}.
    \begin{table}[htb]
    \caption{The pole-sets of the unit volume ball}
    \label{tab:pole_ball}
\smallskip
    \begin{tabularx}{\textwidth}{XXXXXX}
\toprule\noalign{\smallskip}
    Unit ball& $\abs{\Omega_0}$ & $\abs{\Omega_1}$ & $\abs{\Omega_2}$
    &$\abs{\Omega_3}$ &$\abs{\Omega_4}$  \\
\noalign{\smallskip} \midrule\noalign{\smallskip}
    $B_5 $   & 6  & 10 & 118 & 218 & 308  \\
\noalign{\smallskip}
    $B_9 $   & 10 & 18 & 62  & 152 & 352 \\
\noalign{\smallskip}
    $B_{10}$ & 11 & 20 & 72  & 132 & 374 \\
\noalign{\smallskip}
    $B_{12}$ & 13 & 24 & 88  & 164 & 478 \\
\noalign{\smallskip} \bottomrule
    \end{tabularx}
\end{table}    
\end{enumerate}
The pole-sets had been readily generated before the solution procedure.
Compact formulations \eqref{eq:prob_hypercube} and \eqref{eq:prob_ball} are
modeled by YALMIP~\cite{yalmip} and all the problem instances are solved by the Linux version
of CPLEX 12.5 with default settings on a Dell E6400 laptop with Intel Core(TM)2
Duo CPU clocked at 2.53 GHz and with 4 GB of RAM. We evaluate our multipolar
approach from different measuring: the impact of pole-sets, 
the impact of the shadow matrix $\vP$, and the benefit of adaptability. 
\subsubsection{The influence of pole-sets}
Recall that multipolar robust approach closes to some extend the gap between
affine robust value and the fully adjustable value. To test the impact of
pole-sets, we fix $\vP=\vI$.  For relatively lower dimensional cases, we report the
multipolar robust values w.r.t. different pole-sets, and compute the
percentage of the closed gap induced by the multipolar robust approach 
$  \frac{\Pi_\Xi(\Omega_0)-\Pi_\Xi(\Omega_i)}{ \Pi_\Xi(\Omega_0)
    -\Pi_\Xi^*}\times 100$.
For higher dimensional cases, where FARC can hardly be solved in a reasonable
time, we report the multipolar robust values w.r.t. different pole-sets. 

Results related to hypercubes are presented in  Table \ref{tab:benefit}.  The
closed gap percentages are given within parentheses. Overall, these
results appear encouraging as indicated by the closed gaps. Observe that when
the uncertainty set is fixed, the multipolar robust values in general get lower as the pole-set $\Omega$ gets smaller.
\begin{table}[ht]
    \begin{center}
    \caption{The multipolar robust values with different pole-sets (hypercube uncertainty
    sets)}
\label{tab:benefit}
\smallskip
\footnotesize
 \begin{tabularx}{\textwidth}{@{}ll*5{>{\raggedleft\arraybackslash}X}@{}}
     \toprule\noalign{\smallskip}
(m,n) & static & affine/$\Pi_{H_n}(\Omega_0)$ & $\Pi_{H_n}(\Omega_1)$ &
$\Pi_{H_n}(\Omega_2)$ & $\Pi_{H_n}(\Omega_3)$ & $\Pi_{H_n}^*$\\
\noalign{\smallskip} \hline\noalign{\smallskip}
(10,9)  & 24.84 & 12.42 & 12.18(9.45 )& 10.55(73.62) & 10.16(88.98)& 9.88  \\
\noalign{\smallskip}
(10,10) & 25.50 & 12.75 & 11.53(58.65)& 10.96(86.06) & 10.70(98.56)& 10.67  \\
\noalign{\smallskip}
(10,12) & 30.66 & 15.33 & 14.63(35.18)& 13.71(81.41) & 13.43(95.48)& 13.34  \\
\noalign{\smallskip}
(20,9)  & 50.75 & 25.37 & 23.82(46.69)& 22.08(99.10)& 22.06(99.70)& 22.05  \\
\noalign{\smallskip}
(20,10) & 50.88 & 25.44 & 23.56(16.95)& 20.74(42.38)& 18.58(61.86)& 14.35  \\
\noalign{\smallskip}
(20,12) & 59.79 & 29.89 & 27.54(23.64)& 25.40(45.17)& 23.58(63.48)& 19.95  \\
\noalign{\smallskip} \hline\noalign{\smallskip}
(10,15) & 35.81  & 17.90 & 16.91 & 15.98 & 15.29 & - \\
\noalign{\smallskip}
(10,20) & 50.88  & 25.44 & 24.82 & 24.35 & 23.33 & - \\
\noalign{\smallskip}
(10,30) & 64.32  & 32.16 & 31.70 & 31.14 & 30.22 & - \\
\noalign{\smallskip}
(20,15) & 82.49  & 41.25 & 39.36 & 36.20 & 34.87 & - \\
\noalign{\smallskip}
(20,20) & 99.28  & 49.64 & 47.17 & 45.66 & 40.80 & - \\
\noalign{\smallskip}
(20,30) & 157.20 & 78.60 & 77.82 & 76.83 & 73.77 & - \\
\noalign{\smallskip} \bottomrule
\end{tabularx}
\end{center}
\end{table}
Also, we report the computing time for higher dimensional instances associated
with hypercube uncertainty set in Table \ref{tab:cpu}. While the computational
time compared with the affine robust approach scales in magnitude, the
complexity of multipolar robust approach 
is controlled by the choice of pole-sets. In particular, in higher dimensional
cases, where fully adjustable robust values are difficult to obtain, the robust
multipolar approach brings lower cost (compared with affine approach) in a
reasonable time. 
\begin{table}[htbp]
\caption{Computing time (in seconds)}
\label{tab:cpu}
\smallskip
\begin{tabularx}{\textwidth}{@{}l*5{>{\centering\arraybackslash}X}@{}}
\toprule\noalign{\smallskip} 
(m,n) & static & affine/$\Pi_{H_n}(\Omega_0)$ & $\Pi_{H_n}(\Omega_1)$ &
$\Pi_{H_n}(\Omega_2)$ & $\Pi_{H_n}(\Omega_3)$ \\
\noalign{\smallskip} \hline\noalign{\smallskip}
(10,15)     & 0.00 & 0.01 & 0.20 & 1.49  & 6.31     \\
\noalign{\smallskip}
(10,20)     & 0.00 & 0.03 & 0.87 & 2.66  & 27.54  \\
\noalign{\smallskip}
(10,30)     & 0.00 & 0.04 & 1.15 & 4.53  & 39.12  \\
\noalign{\smallskip}
(20,15)     & 0.00 & 0.03 & 0.68 & 10.43 & 34.98   \\
\noalign{\smallskip}
(20,20)     & 0.00 & 0.07 & 4.65 & 16.41 & 152.79  \\
\noalign{\smallskip}
(20,30)     & 0.00 & 0.14 & 3.91 & 15.40 & 220.48 \\
\noalign{\smallskip} \bottomrule
\end{tabularx}
\end{table}

A sequence of lower bounds can also be generated as stated in
Corollary~\ref{cor:twosequence} of Section~\ref{sec:converg}. All we need to do is
to generate a sequence of $(\Gamma_i)_{i=0}^{i=3}$ by projecting the pole-sets
$(\Omega_i)_{i=0}^{i=3}$ onto the surface of hypercubes. The obtained lower bounds are denoted  by $\Pi_{H_n}(\Gamma_i)$. Note that
$\conv \Omega'\subseteq \Omega$ does not necessarily lead  to $\conv \Gamma'
\subseteq \Gamma$ or $\conv \Gamma' \supseteq \Gamma$.  
Thus it may happen that
$\Pi_{H_n}(\Gamma_i)\ge \Pi_{H_n}(\Gamma_{i+1})$. The results are summarized in Table
\ref{tab:lowerH}, where the best lower bound for each problem instance is marked
in bold. 
\begin{table}[h]
\caption{Lower bounds related to hypercubes} 
\label{tab:lowerH}
\smallskip
\begin{tabularx}{\textwidth}{@{}l*5{>{\centering\arraybackslash}X}@{}}
\toprule\noalign{\smallskip} 
(m,n) &$\Pi_{H_n}(\Gamma_0)$  & $\Pi_{H_n}(\Gamma_1)$ & $\Pi_{H_n}(\Gamma_2)$ &
$\Pi_{H_n}(\Gamma_3)$ & $\Pi_{H_n}^*$\\
\noalign{\smallskip} \hline\noalign{\smallskip}
(10,9)  & 6.88        & 8.18        & 9.52       & {\bf 9.65}  & 9.88\\
\noalign{\smallskip}
(10,10) & 7.11        & 8.12        & {\bf 9.62} & 8.34        & 10.67 \\
\noalign{\smallskip}
(10,12) & {\bf 10.50} & 8.88        & 9.48       & 9.57        & 13.34 \\
\noalign{\smallskip}
(20,9)  & 20.08       & 16.05       & 18.70      & {\bf 21.98} & 22.05 \\
\noalign{\smallskip}
(20,10) & 11.88       & 11.88       & 12.44      & {\bf 12.92} & 14.35 \\
\noalign{\smallskip}
(20,12) & 14.73       & 16.65       & 16.96      & {\bf 19.07} & 19.95 \\
\noalign{\smallskip}
(10,15) & 7.62        & 7.63        & 8.40       & {\bf 8.40}  & - \\
\noalign{\smallskip}
(10,20) & 7.86        & 10.20       & 10.20      & {\bf 10.36} & - \\
\noalign{\smallskip}
(10,30) & 7.11        & 9.03        & 9.03       & {\bf 9.79}  & - \\
\noalign{\smallskip}
(20,15) & 23.49       & {\bf 25.86} & 23.56      & 23.56       & - \\
\noalign{\smallskip}
(20,20) & 15.23       & 16.08       & 16.08      & {\bf 19.61} & - \\
\noalign{\smallskip}
(20,30) & 30.53       & 31.12       & 32.53      & {\bf 33.81} & - \\
\noalign{\smallskip} \bottomrule
\end{tabularx}
\end{table}

Let us now focus on the ball case. Remember that  FARC is intractable in general.
However, as shown in Lemma~\ref{lemma:FARC_simple}, we can compute the optimum
of FARC by solving problem \eqref{eq:FARC_simple} when $m$ is small.  We report
the robust values obtained by solving multipolar robust counterpart with
different pole-sets in Table \ref{tab:ball}. The results may
indicate the following. First, the approximate robust values associated with balls
appear lower than those associated with hypercubes although the
volume and symmetric center of balls and hypercubes are the same. Second,
the closed gaps by multipolar robust approach on robust problems with ball
uncertainty sets might be less significant than that with hypercube uncertainty
sets. As might be expected, larger poles-sets are required for balls compared to hypercubes.  Third, despite the 
limitations, multipolar approach closes around $30\%$ of the optimality gap. In
particular, it appears compelling when the number of constraints are large,
while the dimension of the uncertainty set is small. 
\begin{table}[ht]
    \begin{center}
    \caption{The multipolar robust values with different pole-sets (ball)}
\label{tab:ball}
\smallskip
\footnotesize
\begin{tabularx}{\textwidth}{@{}ll*6{>{\raggedleft\arraybackslash}X}@{}}
\toprule\noalign{\smallskip}
(m,n)&static &$\Pi_{B_n}(\Omega_0)$ & $\Pi_{B_n}(\Omega_1)$ & $\Pi_{B_n}(\Omega_2)$ & $\Pi_{B_n}(\Omega_3)$ &
$\Pi_{B_n}(\Omega_4)$&$\Pi_{B_n}^*$\\
\noalign{\smallskip}\hline\noalign{\smallskip}
(10,9)  & 17.27 & 9.43  & 9.28(11.11)  & 9.21(16.30)  & 9.11(23.70)  &
9.01(31.11)  & 8.08 \\
\noalign{\smallskip}
(10,10) & 16.09 & 9.21  & 8.98(23.00)  & 8.95(26.00)  & 8.94(27.00)  &
8.89(32.00)  & 8.21 \\
\noalign{\smallskip}
(10,12) & 19.64 & 10.93 & 10.76(20.99) & 10.76(20.99) & 10.74(23.46) &
10.70(28.40) & 10.12\\
\noalign{\smallskip}
(20,9)  & 35.87 & 19.86 & 19.54(20.92) & 19.51(22.88) & 19.45(26.80) &
19.36(32.68) & 18.33 \\
\noalign{\smallskip}
(20,10) & 33.12 & 17.19 & 16.42(18.08) & 15.62(36.85) & 15.57(38.03) &
15.36(42.96) & 12.93 \\
\noalign{\smallskip}
(20,12) & 39.85 & 20.93 & 20.05(17.25) & 19.96(19.02) & 19.90(20.20) &
19.81(21.96) & 15.83\\
\noalign{\smallskip}
(30,5)  & 19.51 & 11.00 & 10.39 & 9.22  & 9.14  & 9.03  & - \\
\noalign{\smallskip}
(40,5)  & 37.57 & 21.63 & 21.11 & 20.71 & 20.63 & 20.55 & - \\
\noalign{\smallskip}
(50,5)  & 38.14 & 20.94 & 20.06 & 19.33 & 19.14 & 19.02 & - \\
\noalign{\smallskip}\bottomrule
\end{tabularx}
    \end{center}
\end{table}

The lower bounds obtained in the ball case are reported in Table
\ref{tab:lowerB}. Interestingly, the observed sequences of lower bounds
associated with ball $B_n$ are monotonically increasing and their best bounds
in general are close to the fully adjustable robust value.   
\begin{table}[h]
\caption{The lower bounds in the ball case} 
\label{tab:lowerB}
\smallskip
\begin{tabularx}{\textwidth}{@{}l*6{>{\centering\arraybackslash}X}@{}}
\toprule\noalign{\smallskip}
(m,n) &$\Pi_{B_n}(\Gamma_0)$  & $\Pi_{B_n}(\Gamma_1)$ & $\Pi_{B_n}(\Gamma_2)$ &
$\Pi_{B_n}(\Gamma_3)$ & $\Pi_{B_n}(\Gamma_4)$ &$\Pi_{B_n}^*$\\
\noalign{\smallskip} \hline\noalign{\smallskip}
(10,9)  & 6.60  & 6.70  & 6.70  & 6.71  & {\bf 7.67} & 8.08\\
\noalign{\smallskip} 
(10,10) & 6.07  & 6.28  & 6.93  & 7.44  & {\bf 7.60 }& 8.21\\
\noalign{\smallskip} 
(10,12) & 7.26  & 7.51  & 8.01  & 8.08  & {\bf 8.44 }& 10.12\\
\noalign{\smallskip} 
(20,9)  & 16.04 & 16.17 & 16.89 & 16.89 & {\bf 16.89} &  18.33\\
\noalign{\smallskip} 
(20,10) & 12.03 & 12.03 & 12.03 & 12.03 & {\bf 12.03}  & 12.93\\
\noalign{\smallskip} 
(20,12) & 12.35 & 13.41 & 13.67 & 13.67 & {\bf 13.67}  & 15.83\\
\noalign{\smallskip} 
(30,5) & 7.48  & 7.48  & 8.11  & 8.11  & {\bf 8.34 }& - \\
\noalign{\smallskip} 
(40,5) & 16.54 & 17.31 & 19.27 & 19.27 & {\bf 19.87}& -  \\
\noalign{\smallskip} 
(50,5) & 15.53 & 15.53 & 17.22 & 17.22 & {\bf 17.80} & - \\
\noalign{\smallskip} \bottomrule
\end{tabularx}
\end{table}
\subsubsection{The impact of the shadow matrix}
To investigate the impact of the shadow matrix on the robust value of the
robust problem, we conduct some experiments on problem instances w.r.t.
hypercube uncertainty sets.  
The shadow matrices considered here are simply projection matrices on lower
subspaces. The results are displayed in Table \ref{tab:projection}, where the
uncertainty set is a hypercube and several projections are considered (on
$H_5$, $H_7$, $H_{10}$, and $H_{12}$). The pole-set considered is the set of extreme points of the projected set.
As might be expected, the robust value
decreases as more information is employed in MRC. 
\begin{table}[ht]
\caption{Impact of the  shadow matrix}
\label{tab:projection}
\smallskip
\begin{tabularx}{\textwidth}{@{}l*4{>{\centering\arraybackslash}X}@{}}
\toprule\noalign{\smallskip}
(m,n)    & $H_5$ & $H_7$ & $H_{10}$ & $H_{12}$ \\
\noalign{\smallskip} \hline\noalign{\smallskip}
(10,30)  & 56.95 & 51.50 & 45.59 & 43.29 \\
\noalign{\smallskip}
(10,50)  & 109.84 & 105.68 & 100.69 & 96.11 \\
\noalign{\smallskip}
(10,70)  & 162.14 & 159.52 & 156.12 & 154.53 \\
\noalign{\smallskip}
(10,100) & 246.98 & 244.13 & 239.34 & 237.43 \\
\noalign{\smallskip}
(20,30)   & 133.35 & 126.59 & 117.63 & 111.53\\
\noalign{\smallskip}
(20,50)  & 233.73 & 224.82 & 213.04 & 203.89 \\
\noalign{\smallskip}
(20,70)  & 345.56 & 340.41 & 328.49 & 319.79 \\
\noalign{\smallskip}
(20,100) & 462.51 & 453.26 & 439.71 & 431.39\\
\bottomrule
\end{tabularx}
\end{table}
\subsubsection{The benefit of adaptability}
To illustrate the concept of \emph{benefit of adaptability} in the framework of
multipolar robust approach, we compute the multipolar robust values of problem
\eqref{eq:problem} with different proportions of adjustable variables $\vv$. We
allow the first $\lfloor\theta m\rfloor$ components of $\vv$ to be adaptable to the
realization of $\xi$, while keeping the remaining $m-\lfloor\theta m\rfloor$
variables independent of the realization of $\xi$, where $\theta \in
[0,1]$. Note that when $\theta=0$, we get the static case SRC. The results are
summarized in Table \ref{tab:adaptability} and we emphasize here two observations: 
\begin{enumerate}
    \item For each problem instance, as the adaptability ratio $\theta$ increases, the robust value
        decreases significantly, which is reasonable both in theory and
        practice.
    \item As the adaptability ratio $\theta$ increases, the influence of pole-sets on
        the robust value increases. For example, when the adaptability ratio
        $\theta=0.25$, the multipolar robust values of all problem instances remain the
        same with different pole-sets except instance (20,20). When the
        adaptability increases, the robust values of more instances improve as
        the better pole-sets are used, which can be clearly seen when
        $\theta=0.75$  and $\theta=1$. 
    \end{enumerate}
\begin{table}[htb!]
    \caption{The benefit of adaptability (hypercube)}
\label{tab:adaptability}
\smallskip
\begin{tabularx}{\textwidth}{@{}l*4{>{\centering\arraybackslash}X}@{}}
\toprule\noalign{\smallskip}
 (m,n), $\Omega_1$ & $\theta=0.25$ & $\theta=0.5$ & $\theta=0.75$ & $\theta=1$ \\
\noalign{\smallskip} \hline\noalign{\smallskip}
(10,9)           & 23.42         & 19.88        & 16.56         & 12.18      \\
\noalign{\smallskip}
(10,15)          & 32.64         & 24.83        & 21.83         & 16.91      \\
\noalign{\smallskip}
(10,20)          & 46.13         & 37.06        & 32.78         & 24.82      \\
\noalign{\smallskip}
(10,30)          & 59.72         & 47.14        & 41.59         & 31.70      \\
\noalign{\smallskip}
(20,9)           & 45.01         & 36.57        & 28.99         & 23.82      \\
\noalign{\smallskip}
(20,15)          & 73.99         & 58.66        & 46.80         & 39.36      \\
\noalign{\smallskip}
(20,20)          & 85.18         & 73.46        & 59.54         & 47.17      \\
\noalign{\smallskip}
(20,30)          & 137.78        & 118.51       & 99.94         & 77.82      \\
\noalign{\smallskip} \hline\noalign{\smallskip}
(m,n), $\Omega_2$ & $\theta=0.25$ & $\theta=0.5$ & $\theta=0.75$ & $\theta=1$ \\
\noalign{\smallskip} \hline\noalign{\smallskip}
(10,9)           & 23.42         & 19.88        & 15.91         & 10.55      \\
\noalign{\smallskip}
(10,15)          & 32.64         & 24.55        & 21.21         & 15.98      \\
\noalign{\smallskip}
(10,20)          & 46.13         & 36.86        & 32.46         & 24.35      \\
\noalign{\smallskip}
(10,30)          & 59.72         & 46.95        & 41.32         & 31.14      \\
\noalign{\smallskip}
(20,9)           & 45.01         & 36.52        & 27.24         & 22.08      \\
\noalign{\smallskip}
(20,15)          & 73.99         & 58.16        & 45.06         & 36.20      \\
\noalign{\smallskip}
(20,20)          & 84.98         & 72.98        & 58.77         & 45.66      \\
\noalign{\smallskip}
(20,30)          & 137.78        & 117.96       & 99.13         & 76.83      \\
\noalign{\smallskip} \hline\noalign{\smallskip}
(m,n), $\Omega_3$ & $\theta=0.25$ & $\theta=0.5$ & $\theta=0.75$ & $\theta=1$ \\
\noalign{\smallskip} \hline\noalign{\smallskip}
(10,9)           & 23.42         & 19.88        & 15.88         & 10.16      \\
\noalign{\smallskip}
(10,15)          & 32.64         & 24.54        & 20.96         & 15.29      \\
\noalign{\smallskip}
(10,20)          & 46.13         & 36.60        & 31.81         & 23.33      \\
\noalign{\smallskip}
(10,30)          & 59.72         & 46.70        & 40.74         & 30.22      \\
\noalign{\smallskip}
(20,9)           & 45.01         & 36.52        & 27.06         & 22.06      \\
\noalign{\smallskip}
(20,15)          & 73.99         & 58.14        & 44.89         & 34.87      \\
\noalign{\smallskip}
(20,20)          & 84.78         & 72.42        & 57.23         & 40.80      \\
\noalign{\smallskip}
(20,30)          & 137.78        & 116.78       & 96.99         & 73.77      \\
\noalign{\smallskip}
 \bottomrule
\end{tabularx}
\end{table}
\section{Conclusion} \label{sec:conclusion}
In this paper, we have presented a novel approach to handle uncertainty in
optimization problems called the multipolar robust approach, which is based on
a set of poles that are used to approximate the fully adjustable policy by a
set of associated recourse decisions at poles. The approach generalizes the
static approach, the affinely adjustable approach, and the fully adjustable
approach, still we can control its complexity by using the concept of the
shadow matrix and considering a reasonable number of poles. Several algorithms
are proposed for the construction of proper pole-sets for hypercubes and balls.
Comprehensive numerical experiments are carried out to evaluate the performance
of the proposed approach in terms of the robust values, the complexity, and the benefit of
adaptability. In general, the results appear encouraging. 

It would be interesting to investigate further the performance of the
multipolar robust approach on other problems. A systematic study of good
approximations of convex bodies by enclosing polytopes with a limited number
of extreme points should help to alleviate overconservatism and get closer
to the optimal fully adaptable robust value. One can also put more focus
on the approximation of convex bodies from inside using, for example, maximum
volume inscribed polytopes to get better lower bounds of the fully adjustable 
robust value.

While the approach was proposed in the context of a two-stage optimization
problem, it can be adapted to multistage optimization. Multipolar decision
rules can also be considered in stochastic programming. The multipolar approach
might also be combined with finite adaptability or multi-static robustness by
partitioning the uncertainty set into several subsets and considering some
multipolar decision rules for each subset.

\appendix\normalsize
\section*{Appendix: the derivation of \eqref{eq:counterpartball}}
We derive the compact formulation \eqref{eq:counterpartball} of
Section \ref{sec:tract} w.r.t. an ellipsoidal uncertainty set defined by
$\Xi:=\left\{\xi: \norm{\vF\xi}{2} \le 1\right\}$.

For each $i^{th}$ constraint, MRC requires the optimum of the following problem
non-positive. 
\begin{align*}
 \max\limits_{\xi, \vs,\vlambda\ge 0}& \hspace{1em} \vU_i\vu -b_i + \sum\limits_{\omega \in
\Omega}\lambda_{\omega}^\xi \vV_i\vv_{\omega}   \\
    \textrm{s.t.} &\hspace{1em} \norm{\vs}{2}  \le 1, & ~ \\
    &\hspace{1em} \vF\xi  = \vs, & \hspace{1em} \veta_i\in \rit^{n_q}  \\
    &\hspace{1em} \sum\limits_{\omega\in\Omega}\lambda^\xi_\omega\omega =
    \vP\xi, & \hspace{1em} \vsigma_i\in \rit^{n_0} \\ 
          &\hspace{1em} \sum\limits_{\omega\in \Omega}\lambda^\xi_\omega = 1, &
    \hspace{1em} \tau_i\in \rit 
\end{align*}
where $\xi\equiv\lbrack \vU,\vb\rbrack$ and $\veta_i, \vsigma_i, \tau_i$ are dual
multipliers corresponding to each group of constraints. Consider the
corresponding Lagrangian
\begin{align*}
    \mathcal{L}\left(\lambda,\xi,\vs,\veta_i,\tau_i,\vsigma_i\right)
    = ~& \vU_i\vu -b_i + \sum\limits_{\omega \in
    \Omega}\lambda_{\omega}^\xi \vV_i\vv_{\omega} + \veta_i^T\left(\vs- \vF
    \xi\right)  \\
    &+
    \vsigma_i^T\left(\sum\limits_{\omega\in \Omega}\lambda^\xi_\omega\omega -\vP\xi\right)+\tau_i\left(\sum\limits_{\omega\in \Omega}\lambda^\xi_\omega -1\right).
\end{align*}
The dual function is then
$\max\limits_{\lambda,\xi,\norm{\vs}{2}\le 1} ~\mathcal{L}
\left(\vlambda,\xi,\veta_i,\tau_i,\vsigma_i\right)$.
Setting the derivative w.r.t. $\vlambda,\xi$ leads to the dual constrains 
\begin{equation}\label{eq:constraints}
\begin{split}
    \vV_i\vv_\omega+ \tau_i - \omega^T\vsigma_i &\le 0 ,\\
    \vF^T\eta_i  - \vL_i &= \allzeros,
\end{split}
\end{equation}
where $\vP=\left[\vP_1,\dots, \vP_k,\dots,\vP_m\right], \vL_i = \begin{pmatrix} \vL_{i1}, \dots ,
\vL_{im}\end{pmatrix}, \vL_{ij} = \delta_{ij}\begin{pmatrix} \vu , -1\end{pmatrix} +
\vP_j^T\vsigma_i, ~j=1,\dots,m$. The dual objective is
\begin{equation}\label{eq:obj}
   \min\limits_{\veta_i,\tau_i,\vsigma_i} \hspace{1em}\norm{\veta_i}{2} -\tau_i.
\end{equation}
By duality, the optimum of the above dual problem is equal to the optimum of
the problem. Thus restricting the non-positivity of the
primal optimum can be equivalently represented as 
\begin{equation}
\begin{split}
   \norm{\veta_i}{2}+\vV_i\vv_\omega - \omega^T\vsigma_i &\le 0 ,\\
    \vF^T\eta_i  - \vL_i &=\allzeros,\\
\veta_i \in \rit^{n_q}, \vsigma_i& \in \mathbb{R}^{n_0}. 
\end{split}
\end{equation}



\end{document}